\newtheorem{lemma}{Lemma}
\newtheorem{proposition}{Proposition}
\newtheorem{corollary}[proposition]{Corollary}
\numberwithin{lemma}{section}
\newtheorem{remark}{Remark}
\newtheorem{Aremark}[lemma]{Remark}
\newtheorem*{remark*}{Remark}
\newcounter{rcnt}[section]
\renewcommand{\thercnt}{(\roman{rcnt})}
\newcommand{\blind}{1}
\begin{document}

\def\spacingset#1{\renewcommand{\baselinestretch}%
{#1}\small\normalsize} \spacingset{1}


\if1\blind
{
  \title{\bf On the length of post-model-selection confidence intervals conditional on polyhedral constraints}
  \author{Danijel Kivaranovic and Hannes Leeb\\
    Department of Statistics and Operations Research, University of Vienna}
  \maketitle
} \fi

\if0\blind
{
  \bigskip
  \bigskip
  \bigskip
    \title{\bf On the length of post-model-selection confidence intervals conditional on polyhedral constraints}
  \date{}
  \maketitle
  \medskip
} \fi


\bigskip
\begin{abstract}
Valid inference after model selection is currently a very active
area of research.
The polyhedral method, pioneered by \cite{lee2016}, 
allows for valid inference after model selection if the model selection
event can be described by polyhedral constraints. 
In that reference, the method is exemplified by constructing
two valid confidence intervals when the Lasso estimator is used
to select a model. We here study the length of these
intervals.
For one of these confidence intervals, which is easier to compute,
we find that its expected length is always infinite.
For the other of these confidence intervals, whose computation is more
demanding, we give a necessary and sufficient condition 
for its expected length to be infinite. 
In simulations, we find that this sufficient condition is typically 
satisfied, unless the selected model includes almost all
or almost none of the available regressors.
For the distribution of
confidence interval length, we find that the $\kappa$-quantiles
behave like $1/(1-\kappa)$ for $\kappa$ close to $1$.
Our results can also be used to analyze other confidence intervals
that are based on the polyhedral method.
\end{abstract}

\noindent%
{\it Keywords:}  Lasso, inference after model selection, confidence interval,
	hypothesis test.
\vfill

\newpage
\spacingset{1.45} 


\section{Introduction}

\cite{lee2016} recently introduced a new technique for valid inference
after model selection, the so-called polyhedral method. 
Using this method, and
using the Lasso for model selection in linear regression, 
\cite{lee2016} derived two new confidence sets that are valid
conditional on the  outcome of the model selection step.
More precisely, let $\hat{m}$ denote the model containing those
regressors that correspond to non-zero coefficients of the Lasso estimator,
and let \smash{$\hat{s}$} denote the sign-vector of those non-zero
Lasso coefficients. Then \cite{lee2016} constructed confidence 
intervals 
\smash{$[L_{\hat{m}, \hat{s}}, U_{\hat{m}, \hat{s}}]$}
and 
\smash{$[L_{\hat{m}}, U_{\hat{m}}]$} 
whose coverage probability is \smash{$1-\alpha$},
conditional on the events \smash{$\{ \hat{m}=m, \hat{s} = s\}$}
and \smash{$\{\hat{m} = m\}$}, respectively (provided that the probability
of the conditioning event is positive).
The computational effort in constructing these intervals is considerably
lighter for  
\smash{$[L_{\hat{m}, \hat{s}}, U_{\hat{m}, \hat{s}}]$}.
In simulations, \cite{lee2016} noted that this latter interval
can be quite long in some cases; cf. Figure~10 in that reference.
We here analyze the lengths of these intervals through their
(conditional) means and through their quantiles.

We focus here on the original proposal of \cite{lee2016} for the
sake of simplicity and ease of exposition.
Nevertheless,
our findings also carry over to several recent developments that rely
on the polyhedral method and that are mentioned in Section~\ref{context};
see Remark~\ref{r1}\ref{r1.1} and Remark~\ref{r3}.

\subsection{Overview of findings}

Throughout, we use the same setting and assumptions 
as \cite{lee2016}. In particular, we
assume that the response vector is distributed as $N(\mu,\sigma^2 I_n)$
with unknown mean $\mu \in \mathbb R^n$ and known variance $\sigma^2>0$
(our results carry over to the unknown-variance case; see 
Section~\ref{unknownsigma}), and that the non-stochastic regressor matrix has
columns in general position. 
Write $\mathbb P_{\mu,\sigma^2}$ and
$\mathbb E_{\mu,\sigma^2}$  for the probability measure
and the expectation operator, respectively,
corresponding to $N(\mu,\sigma^2 I_n)$.

For the interval 
$[ L_{\hat{m}, \hat{s}}, U_{\hat{m}, \hat{s}} ]$,
we find the following:
Fix a non-empty model $m$, a sign-vector $s$, as well as
 $\mu\in \mathbb R^n$ and $\sigma^2>0$.
If $\mathbb P_{\mu,\sigma^2}( \hat{m}=m, \hat{s} = s) > 0$, then
\begin{equation}\label{e1}
\mathbb E_{\mu,\sigma^2}\left[
\left.
U_{\hat{m},\hat{s}} - L_{\hat{m},\hat{s}}  
\right|
\hat{m}=m, \hat{s} = s
\right] \quad  = \quad \infty;
\end{equation}
see Proposition~\ref{prop:sets} and the attending discussion.
Obviously, this statement continues to hold if  the event 
$\hat{m}=m, \hat{s} = s$ is replaced by the larger event
$\hat{m}=m$ throughout. And this statement continues to hold if
the condition 
$\mathbb P_{\mu,\sigma^2}( \hat{m}=m, \hat{s} = s) > 0$
is dropped and the 
conditional expectation in \eqref{e1} is  replaced by the unconditional one.

For the interval $[ L_{\hat{m}}, U_{\hat{m}}]$, we derive a
necessary and sufficient condition for its expected length to be infinite,
conditional on the event $\hat{m} = m$; cf. Proposition~\ref{prop:equiv}.
That condition is never satisfied if the model $m$ is empty or includes
only one regressor; it is also typically never satisfied if $m$ includes
all available regressors (see Corollary~\ref{corollary}).
The necessary and sufficient condition depends on
the regressor matrix, on the model $m$ and also
on a linear contrast that defines
the quantity of interest, and is daunting
to verify in all but the most basic examples. 
We also provide a sufficient condition for infinite expected length
that is easy to verify.
In simulations, we find
that this sufficient condition for infinite expected length
is typically satisfied except for two somewhat extreme cases:
(a) If the Lasso penalty is very large (so that almost all regressors
are excluded).
(b) If the number of available regressors is not larger
than sample size and the Lasso parameter is very small  (so that 
almost no regressor is excluded).
See Figure~\ref{fig:heatmap} for more detail.

Of course, a
confidence interval with infinite  expected length can still be quite short
with high probability. 
In our theoretical analysis and in our simulations,
we find that the $\kappa$-quantiles  of
$U_{\hat{m},\hat{s}} - L_{\hat{m},\hat{s}}$  and $U_{\hat{m}} - L_{\hat{m}}$
behave like the $\kappa$-quantiles of $1/U$ with $U\sim U(0,1)$, 
i.e., like $1/(1-\kappa)$, for $\kappa$ close to $1$
if the conditional expected length of these intervals is infinite; cf.
Proposition~\ref{prop:quant}, Figure~\ref{fig:quantsim} and the
attending discussions.

The methods developed in this paper can also be used
if the Lasso, as the model selector, is replaced by any other procedure
that relies on the polyhedral method; cf.
Remark~\ref{r1}\ref{r1.1} and Remark~\ref{r3}.
In particular, we see that confidence intervals based on the
polyhedral method in Gaussian regression can have infinite expected length.
Our findings suggest that the expected
length of confidence intervals based on the polyhedral method
should be closely scrutinized, in Gaussian regression but
also in non-Gaussian settings and other variations of
the polyhedral method.

`Length' is arguably only one of several possible criteria for
judging the `quality' of valid confidence intervals, albeit one of
practical interest. 
Our focus on confidence interval length  is justified
by our findings.

The rest of the paper is organized as follows: 
We conclude this section by discussing a number of related
results that put our findings in context.
Section~\ref{sec:notation}  describes the confidence intervals
of \cite{lee2016} in detail and introduces some notation.
Section~\ref{sec:core} contains our core results, 
Propositions~\ref{prop:ci_length} through~\ref{prop:quant} which
entail our main findings, 
as well as a discussion of the unknown variance case.
The simulation studies mentioned earlier are given in Section~\ref{sim}.
Finally, in Section~\ref{discussion}, we discuss some implications of
our findings. In particular, we argue that the computational simplicity
of the polyhedral method comes at a price in terms of interval length,
and that computationally more involved methods can provide a remedy.
The appendix contains the proofs 
and some auxiliary lemmata.

\subsection{Context and related results}
\label{context}

There are currently several exciting ongoing developments based on the 
polyhedral
method, not least because it proved to be applicable to 
more complicated settings, and there are several generalization of this 
framework. See, among others, 
\cite{
fithian2015,	
gross2015,	
markovic2018, 
panigrahi2018,	
panigrahi2019, 
reid2015,	 
taylor2016,
taylor2017,
tian2017,
tian2015,
tian2017b,	
tian2016,
tibshirani2016}. 
Certain optimality results of the method of \cite{lee2016} are given in 
\cite{fithian2017}. 
Using a different approach,
\cite{berk2013} proposed the so-called PoSI-intervals which 
are unconditionally valid. A benefit 
of the PoSI-intervals is that they are valid after selection with any possible 
model selector, instead of a particular one like the Lasso; however, as a 
consequence, the PoSI-intervals are typically very conservative (that is, the 
actual coverage probability is above the nominal level). Nonetheless,
\cite{bachoc2016} 
showed in a Monte Carlo simulation that, in certain scenarios, 
the PoSI-intervals can be shorter than the intervals of \cite{lee2016}.
The results of the present paper are based on the first author's master's thesis.

It is important to note that all confidence sets discussed so far
are non-standard, in the sense that the parameter to be covered
is not the true parameter in an underlying correct model 
(or components thereof), but  instead
is a model-dependent quantity of interest. (See Section~\ref{sec:notation}
for details and the references in the preceding paragraph for more
extensive discussions.)
An advantage of this non-standard approach is that it does not rely
on the assumption that any of the candidate models is correct.
Valid inference for an underlying true parameter is a 
more challenging task, as demonstrated by the impossibility results in 
\cite{leeb2006a,leeb2006b,leeb2008a}. There are several proposals of valid 
confidence intervals after model selection (in the sense that the actual 
coverage probability of the true parameter is at or above the nominal level) 
but these are rather large compared to the standard confidence intervals from 
the full model (supposing that one can fit the full model); see
\cite{poetscher2009,poetscher2010,schneider2016}. In fact, \cite{leeb2017} 
showed that the usual confidence interval obtained by fitting the full model is 
admissible also in the unknown variance case; therefore, one cannot 
obtain uniformly smaller valid confidence sets for a component of 
the true parameter
by any other method.


\section{Assumptions and confidence intervals} 
\label{sec:notation}

Let $Y$ denote the $N(\mu,\sigma^2 I_n)$-distributed response vector, $n\geq 1$,
where 
$\mu \in \mathbb{R}^n$ is unknown and $\sigma^2>0$ is known. Let 
$X=(x_1,\dots,x_p)$, $p\geq 1$,
with $x_i \in \mathbb{R}^n$ for each $i=1,\dots,p$, be the 
non-stochastic $n\times p$ regressor matrix. 
We assume that the columns of $X$ are in general position (this mild assumption
is further discussed in the following paragraph).
The full model $ \{1,\dots,p\}$ is denoted by 
$m_F$. All subsets of the full model are collected in $\mathcal{M}$, that is, 
$\mathcal{M}= \{m: m \subseteq m_F\}$. 
The cardinality of a model $m$ is denoted by $|m|$. 
For any $m = \{i_1, \dots, i_k\} \in 
\mathcal{M} \setminus \emptyset$
with $i_1 < \dots < i_k$, 
we set $X_m = (x_{i_1}, \dots, x_{i_k})$. 
Analogously, for any vector $v \in \mathbb{R}^p$, we set $v_m = 
(v_{i_1},\dots,v_{i_k})'$. If $m$ is the empty model, then $X_m$ is to be 
interpreted as the zero vector in $\mathbb{R}^n$ and $v_m$ as 0. 

The Lasso estimator, denoted 
by $\hat{\beta}(y)$, is a minimizer of the least squares problem with an 
additional penalty on the absolute size of the regression coefficients 
\citep{frank1993,tibshirani1996}:
\begin{equation} \nonumber
	\underset{\beta \in \mathbb{R}^p}{\text{min}} \ \frac{1}{2} \lVert y-X 
\beta \rVert_2^2 + \lambda \lVert \beta \rVert_1, \quad y \in \mathbb{R}^n, \ 
\lambda>0.
\end{equation}
The Lasso has the property that some coefficients 
of $\hat{\beta}(y)$ are zero 
with positive probability.
A minimizer of the Lasso objective function always exists, 
but it is not necessarily unique. Uniqueness of $\hat{\beta}(y)$ is guaranteed 
here by our
assumption that the columns of $X$ are in general position 
\citep{tibshirani2013}. This 
assumption is relatively mild; e.g., if the entries of $X$ are drawn 
from a (joint) distribution that has a Lebesgue density,
then the columns of $X$ are in 
general position with probability $1$ \citep{tibshirani2013}. 
The model $\hat{m}(y)$ selected by the Lasso 
and the sign-vector $\hat{s}(y)$ of
non-zero Lasso coefficients can now formally be defined through
$$
	\hat{m}(y) = \left\{j: \hat{\beta}_j(y) \neq 0 \right\}
	\qquad\text{ and } \qquad
	\hat{s}(y) = 
	\text{sign}\left(\hat{\beta}_{\hat{m}(y)}(y)\right)
$$
(where $\hat{s}(y)$ is left undefined if $\hat{m}(y)=\emptyset$).
Recall that $\mathcal M$ denotes the set of all possible submodels
and set $\mathcal S_m = \{-1,1\}^{|m|}$ for each $m\in \mathcal M$.
For later use we also denote by $\mathcal M^+$ and
$\mathcal S_m^+$ the collection of models 
and the collection of corresponding sign-vectors,
that occur with positive probability, i.e.,
\begin{align*}
	\mathcal{M^+}  &= \left\{ m \in \mathcal{M}: 
\mathbb{P}_{\mu,\sigma^2}(\hat{m}(Y) = m) > 0 \right\},\\
	\mathcal{S}^+_m &= \{s \in \mathcal{S}_m: 
\mathbb{P}_{\mu,\sigma^2}(\hat{m}(Y) = m, \hat{s}(Y) = s) > 0 \}
\qquad (m \in \mathcal M).
\end{align*}
These sets do not depend on $\mu$ and $\sigma^2$ as the 
measure $\mathbb{P}_{\mu,\sigma^2}$ is equivalent to Lebesgue measure with 
respect to null sets.  
Also, our assumption that the columns of $X$
are in general position guarantees that 
$\mathcal M^+$ only contains models $m$ for which $X_m$ has column-rank $m$
\citep{tibshirani2013}.

Inference is focused on a 
non-standard, model dependent, quantity of interest.
Consider first the non-trivial case where
$m\in\mathcal M^+\setminus\{\emptyset\}$. In that case, we set
\begin{equation} \nonumber
	\beta^{m} = \mathbb{E}_{\mu,\sigma^2}\left[(X_m'X_m)^{-1}X_m' Y \right] 
= (X_m'X_m)^{-1}X_m' \mu.
\end{equation}
For $\gamma^m \in \mathbb R^{|m|}\setminus\{0\}$,
the goal is to construct a confidence interval for $\gamma^m\mbox{}' \beta^m$
with conditional coverage probability $1-\alpha$ on the event
$\{\hat{m} = m\}$.
Clearly, the quantity of interest can also be written as
$\gamma^m\mbox{}' \beta^m = \eta^{m}\mbox{}' \mu$ for $\eta^m = X_m (X_m' X_m)^{-1} \gamma^m$.
For later use, write $P_{\eta^m}$
for the orthogonal projection on the space spanned by $\eta^m$.
Finally, for the trivial case where $m= \emptyset$,  we set
$\beta^\emptyset = \gamma^\emptyset = \eta^\emptyset= 0$.

At the core of the polyhedral method lies the observation
that the event where $\hat{m} =m$ and where $\hat{s} = s$
describes a convex polytope in sample space $\mathbb R^n$ (up to a Lebesgue
null set):
Fix $m\in \mathcal M^+\setminus\{\emptyset\}$ 
and $s \in \mathcal S_m^+$. Then
\begin{equation}\label{polyh}
\left\{ y:\; \hat{m}(y) = m, \hat{s}(y) = s\right\}
\quad\stackrel{\text{a.s.}}{=}\quad
\{ y:\; A_{m, s} y < b_{m,s}\},
\end{equation}
cf. Theorem 3.3 in \cite{lee2016} 
(explicit formulas for the matrix $A_{m,s}$ and the vector $b_{m,s}$
are also repeated in Appendix~\ref{sec:proof_prop_sets}
in our notation).
Fix $z \in \mathbb R^n$ orthogonal to $\eta^m$. Then the set
of $y$ satisfying $(I_n - P_{\eta^m})y = z$ and
$A_{m, s} y < b$ is either empty or a line segment.
In either case, that set can be written as
$\{ z + \eta^m w:\; \mathcal V^-_{m,s}(z) < w < \mathcal V^+_{m, s}(z)\}$. 
The endpoints satisfy
$-\infty \leq \mathcal V^-_{m,s}(z) \leq \mathcal V^+_{m,s}(z) \leq \infty$
(see Lemma 4.1  of \citealt{lee2016}; formulas for these quantities
are also given in Appendix~\ref{sec:proof_prop_sets} in our notation).
Now decompose $Y$ into the sum of two independent Gaussians
$P_{\eta^m} Y$ and  $(I_n - P_{\eta^m}) Y$, where the first one
is a linear function of 
$\eta^{m}\mbox{}' Y \sim N(\eta^{m}\mbox{}' \mu, \sigma^2 
\eta^{m}\mbox{}' \eta^{m})$. With this, the conditional distribution of 
$\eta^{m}\mbox{}' Y$,
conditional on the event $\{\hat{m}(Y) = m, \hat{s}(Y) = s,
(I_n-P_{\eta^m})(Y) = z\}$, is
the conditional 
$N(\eta^{m}\mbox{}' \mu, \sigma^2 \eta^{m}\mbox{}'\eta^m)$-distribution,
conditional on the set $(\mathcal V^-_{m, s}(z), \mathcal V^+_{m, s}(z))$
(in the sense that the latter conditional distribution is a 
regular conditional distribution 
if one starts with the conditional
distribution of $\eta^m\mbox{}'Y$ given $\hat{m}=m$ and $\hat{s}=s$
-- which is always well-defined --
and if one then conditions on the random variable $(I_n-P_{\eta^m})Y$).

To use these observations for the construction of confidence sets,
consider first the conditional distribution of
a random variable $V \sim N(\theta,\varsigma^2)$ conditional on the event
$V \in T$, where $\theta\in \mathbb R$, where $\varsigma^2>0$ and  where
$T \neq \emptyset$ is the union of finitely many open
intervals.  The intervals may be unbounded.
Write $F^T_{\theta,\varsigma^2}(\cdot)$ for the cumulative
distribution function (c.d.f) of $V$ given $V\in T$.
The corresponding law can be viewed as a `truncated normal'
distribution and will be denoted by $TN(\theta,\varsigma^2,T)$ in the following.
We will construct a confidence interval based
on $W$, where $W\sim TN(\theta,\varsigma^2,T)$.
Such an interval, which covers
$\theta$ with probability $1-\alpha$, is obtained by 
the usual method of collecting all values $\theta_0$ for which
a hypothesis test of $H_0: \theta=\theta_0$ against $H_1: \theta\neq \theta_0$
does not reject, based on the observation $W\sim TN(\theta,\varsigma^2,T)$. 
In particular, 
for $w\in\mathbb R$, define $L(w)$ and $U(w)$ through
$$
F_{L(w),\varsigma^2}^T(w) = 1-\frac{\alpha}{2}\quad\text{and}\quad
F_{U(w),\varsigma^2}^T(w) = \frac{\alpha}{2},
$$
which are well-defined in view of Lemma~\ref{le:3}.
With this, we have
$ P( \theta \in [L(W), U(W)])
= 1-\alpha$ irrespective of $\theta \in \mathbb R$.

Fix $m \in \mathcal M^+\setminus\{\emptyset\}$ and $s \in \mathcal S_m^+$, and
let $\sigma^2_m = \sigma^2 \eta^{m}\mbox{}' \eta^m$ and
$T_{m,s}(z) = (\mathcal V^-_{m, s}(z), \mathcal V^+_{m, s}(z))$
for $z$ orthogonal to $\eta^m$.
With this, we have
\begin{equation}\label{likeW}
\eta^{m}\mbox{}' Y \Big| \{ \hat{m}=m, \hat{s} = s, 
	(I_n-P_{\eta^m})Y = z\}
	\quad\sim\quad
	TN( \eta^{m}\mbox{}'\mu, \sigma_m^2, T_{m, s}(z))
\end{equation}
for each $z \in \{ (I_n-P_{\eta^m}) y: A_{m,s} y < b_{m,s}\}$.
Now define $L_{m,s}(y)$ and $U_{m,s}(y)$ through
$$
F^{T_{m,s}( (I_n-P_{\eta^m})y)}_{ L_{m,s}(y), \sigma_m^2}(\eta^{m}\mbox{}' y) 
= 1-\frac{\alpha}{2}
\quad\text{and} \quad
F^{T_{m,s}( (I_n-P_{\eta^m})y)}_{ U_{m,s}(y), \sigma_m^2}(\eta^{m}\mbox{}' y) 
= \frac{\alpha}{2}
$$
for each $y$ so that $A_{m,s} y < b_{m,s}$.
By the considerations in the preceding paragraph, it follows that
\begin{equation}\label{toomuch1}
{\mathbb P}_{\mu,\sigma^2}\Big( 
	\eta^{m}\mbox{}' \mu \in [L_{m,s}(Y), U_{m,s}(Y)] 
	\Big| \hat{m}=m, \hat{s}=s, (I_n-P_{\eta^m})Y = z\Big)
	= 1-\alpha.
\end{equation}
Clearly, the random interval $[L_{m,s}(Y), U_{m,s}(Y)]$
covers $\gamma^m\mbox{}' \beta^m = \eta^{m}\mbox{}' \mu$
with probability $1-\alpha$ also conditional on the event
that $\hat{m}=m$ and $\hat{s}=s$ or on the event that $\hat{m}=m$.

In a similar fashion, fix $m \in \mathcal M^+$.
In the non-trivial case where $m\neq \emptyset$, we set
$T_m(z) = \cup_{s\in \mathcal S_m^+} T_{m,s}(z)$
for $z$ orthogonal to $\eta^m$, and
define $L_m(y)$ and $U_m(y)$ through
$$
F^{T_m( (I_n-P_{\eta^m})y)}_{ L_{m}(y), \sigma_m^2}(\eta^{m}\mbox{}' y) 
= 1-\frac{\alpha}{2}
\quad\text{and} \quad
F^{T_m( (I_n-P_{\eta^m})y)}_{ U_{m}(y), \sigma_m^2}(\eta^{m}\mbox{}' y) 
= \frac{\alpha}{2}.
$$
Arguing as in the preceding paragraph, we see that
the random interval $[L_m(Y), U_m(Y)]$ covers
$\gamma^m\mbox{}'\beta^m = \eta^{m}\mbox{}' \mu$ 
with probability $1-\alpha$ conditional on any
of the events
$\{ \hat{m}=m, (I_n-P_{\eta^m})Y=z\}$ and
$\{ \hat{m}=m\}$.
In the trivial case where $m=\emptyset$, we set 
$[L_\emptyset(Y),R_\emptyset(Y)] = \{0\}$ with probability $1-\alpha$
and 
$[L_\emptyset(Y),R_\emptyset(Y)] = \{1\}$ with probability $\alpha$,
so that similar coverage properties also hold in that case.
The unconditional coverage probability of the interval
$[L_{\hat{m}}(Y), R_{\hat{m}}(Y)]$ then also equals $1-\alpha$.

\begin{remark} \normalfont\label{r1}
\begin{list}{\thercnt}{
        \usecounter{rcnt}
        \setlength\itemindent{5pt}
        \setlength\leftmargin{0pt}
        \setlength\partopsep{0pt}
        }
\item\label{r1.1}
	If $\tilde{m} = \tilde{m}(y)$ is any other model selection procedure,
	so that the event $\{\tilde{m} = m\}$ can be represented as
	the union of a finite number of polyhedra (up to null sets), 
	then the polyhedral method
	can be applied to obtain a confidence set for $\eta^m\mbox{}' \mu$
	with conditional coverage probability $1-\alpha$, conditional
	on the event $\{\tilde{m}=m\}$, if that event has positive probability.
\item\label{r1.2}
	We focus here on equal-tailed confidence intervals for the sake of
	brevity.  It is easy to adapt all our results 
	to the unequal-tailed case, that is, the case where
	$\alpha/2$ and $1-\alpha/2$ are replaced by 
	$\alpha_1$ and $1-\alpha_2$ with only minor modifications
	of the proofs, provided that $\alpha_1$ and $\alpha_2$ are
	are both in (0,1/2]. (The remaining case, in which
	$1/2 < \alpha_1 + \alpha_2<1$, 
	is of little interest,  because the corresponding coverage
	probability is $1-\alpha_1-\alpha_2 < 1/2$ here, and  is left
	as an exercise.) Another alternative, the
	uniformly most accurate unbiased interval, is discussed
	at the end of Section~\ref{discussion}.
\item\label{r1.3}
	In Theorem 3.3 of \cite{lee2016}, relation \eqref{polyh} 
	is stated as an equality, not as an equality up to null sets,
	and with the right-hand side replaced by 
	$\{ y: A_{m,s} y \leq b_{m,s}\}$ (in our notation).
	Because \eqref{polyh} differs from this only on a
	Lebesgue null set, the difference is inconsequential for the purpose
	of the present paper.
	The statement in \cite{lee2016} is based on the fact that
	$\hat{m}$ was defined as the 
	equicorrelation set \citep{tibshirani2013} in that paper. 
	But if $\hat{m}$ is the equicorrelation set,  then
	there can exist vectors $y \in \{\hat{m}=m\}$ such that 
	some coefficients of $\hat{\beta}(y)$ are zero,
	which clashes with the idea that $\hat{m}$ contains those variables
	whose Lasso coefficients are non-zero.
	However, for any $m \in \mathcal{M}^+$, 
	the set of such $y$s is a Lebesgue null set.
\end{list}
\end{remark}


\section{Analytical results} \label{sec:core}

\subsection{Mean confidence interval length}

We first analyze the simple confidence set $[L(W), U(W)]$ 
introduced in the preceding section, which covers $\theta$
with probability $1-\alpha$,
where $W\sim TN(\theta,\varsigma^2,T)$. By assumption,
$T$ is of the form $T = \cup_{i=1}^K (a_i, b_i)$
where $K<\infty$ and $-\infty \leq a_1 < b_1 < \dots < a_K < b_K \leq 
\infty$. 
Figure~\ref{fig:ci_length} exemplifies the length of $[L(w), U(w)]$ when $T$ 
is bounded (left panel) and when $T$ is unbounded (right panel). The dashed 
line corresponds to the length of the standard (unconditional)
confidence interval for $\theta$ based on $V\sim N(\theta,\varsigma^2)$.  In the
left panel, we see that the length of $[L(w), U(w)]$ diverges as $w$ approaches 
the far left or the far right boundary point of the truncation set (i.e., -3 and 3). 
On the other 
hand, in the right panel we see that the length of $[L(w), U(w)]$ is 
bounded and 
converges to the length of the standard interval as $|w|\to\infty$.
\begin{figure}[h!] 
	\includegraphics[width=1\textwidth]{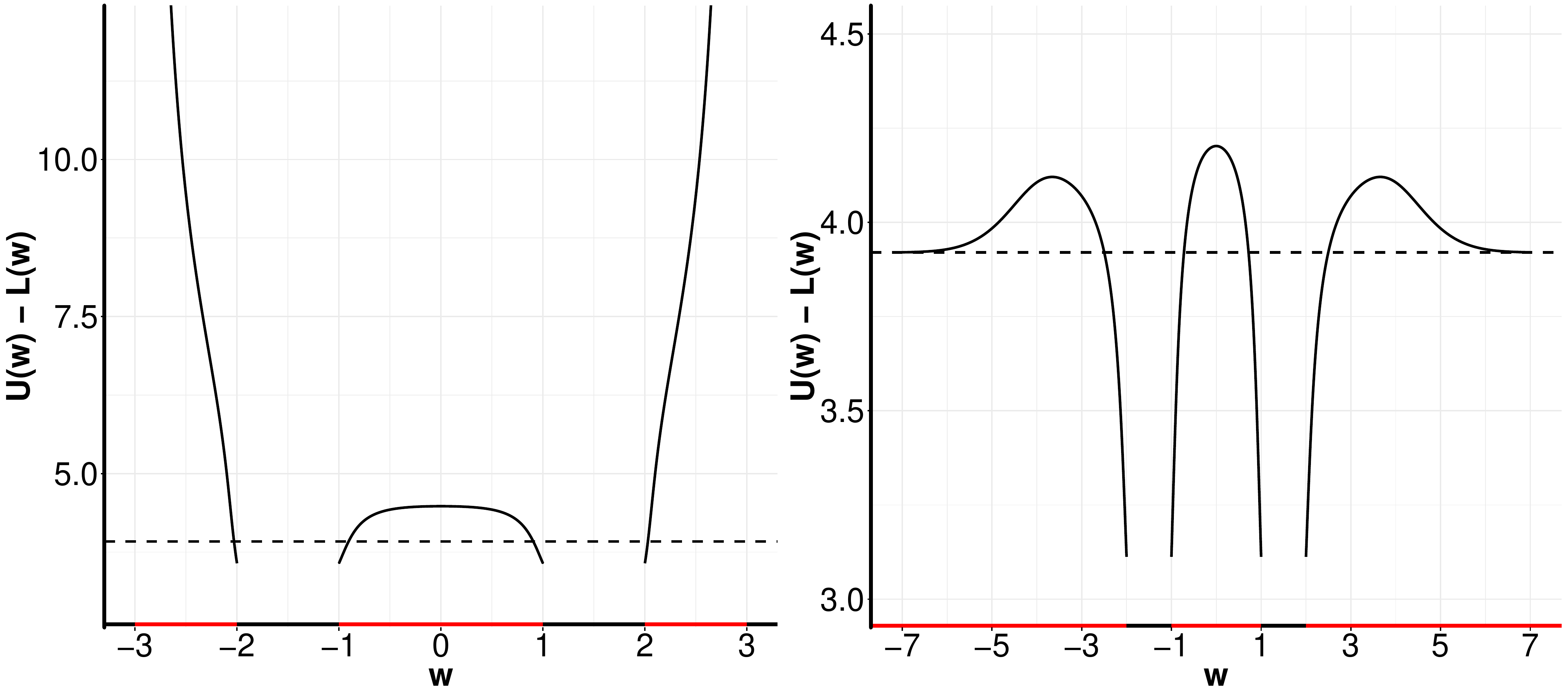} 
	\caption{Length of the interval $[L(w), U(w)]$ for the case
	where $T$, colored red, is given by 
	$T=(-3,-2) \cup (-1,1) \cup (2,3)$ (left panel)
	and the case where $T=(-\infty,-2) \cup (-1,1) \cup (2,\infty)$
	(right panel). In both cases, we took $\varsigma^2 = 1$ and $\alpha=0.05$.
}\label{fig:ci_length}
\end{figure} 

Write $\Phi(w)$ and $\phi(w)$ for the c.d.f. and p.d.f. of the standard
normal distribution, respectively, where we adopt the usual convention
that $\Phi(-\infty) = 0$ and $\Phi(\infty) = 1$. 

\begin{proposition}[The interval {$[L(W),U(W)]$} for truncated normal $W$] 
\label{prop:ci_length}
	Let $W\sim TN(\theta,\varsigma^2,T)$. 
	If $T$ is bounded either from above or from below,
	then
		\begin{equation} \nonumber
			E [U(W) - L(W) ] \quad = \quad \infty.
		\end{equation}
	If $T$ is unbounded from above and from below, then 
	\begin{align*}
		\frac{U(W)-L(W)}{\varsigma} 
		\quad\stackrel{\text{a.s.}}{\leq}\quad
		&2 \Phi^{-1}( 1-p_\ast \alpha/2)
		\\
		\quad \stackrel{}{\leq}\quad  
		&2 \Phi^{-1}(1-\alpha/2) \;+\;\frac{a_K-b_1}{\varsigma},
	\end{align*}
	where $p_\ast = \inf_{\vartheta \in\mathbb R} P( N(\vartheta,\varsigma^2) \in T)$
	and where $a_K-b_1$ is to be interpreted as $0$ in case $K=1$. 
	[These first inequality trivially continues to hold if $T$ is bounded, 
	as then $p_\ast = 0$.]
\end{proposition}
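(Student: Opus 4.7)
The plan is to handle the two cases separately: the unbounded case by uniform normal-tail bounds on the truncated CDF, and the bounded case by Mills-ratio asymptotic analysis of the interval endpoints as $w$ approaches the finite boundary of $T$. First, suppose $T$ is unbounded from both sides, so that $T^c \subseteq [b_1, a_K]$ is bounded and $p_\ast > 0$. For any $\theta \in \mathbb R$ and any $w \in T$, with $V \sim N(\theta, \varsigma^2)$,
\[
F^T_{\theta, \varsigma^2}(w) = \frac{P(V \leq w,\, V \in T)}{P(V \in T)} \leq \frac{\Phi((w-\theta)/\varsigma)}{p_\ast}, \qquad 1 - F^T_{\theta, \varsigma^2}(w) \leq \frac{\Phi((\theta-w)/\varsigma)}{p_\ast}.
\]
Setting $\theta = U(w)$ in the first inequality and $\theta = L(w)$ in the second, then using $F^T_{U(w),\varsigma^2}(w) = \alpha/2$ and $F^T_{L(w),\varsigma^2}(w) = 1-\alpha/2$, yields $U(w) - w \leq \varsigma\Phi^{-1}(1 - p_\ast\alpha/2)$ and $w - L(w) \leq \varsigma\Phi^{-1}(1 - p_\ast\alpha/2)$ almost surely, which together give the first displayed inequality (the argument also covers the case $p_\ast = 0$, where the bound is vacuous).

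For the second displayed inequality, I would bound $p_\ast$ from below: since $T^c \subseteq [b_1, a_K]$, one has $1 - p_\ast \leq \sup_\vartheta P(V \in [b_1, a_K])$; this supremum is attained at $\vartheta = (b_1 + a_K)/2$ with value $2\Phi(c/2) - 1$, where $c = (a_K - b_1)/\varsigma$, so $p_\ast \geq 2\Phi(-c/2)$. Writing $q = \Phi^{-1}(1 - \alpha/2)$, the target bound $\Phi^{-1}(1 - p_\ast\alpha/2) \leq q + c/2$ rearranges to $\Phi(-q - c/2) \leq p_\ast \alpha/2$, and by the bound on $p_\ast$ it suffices to show $\Phi(-q - c/2) \leq \alpha\Phi(-c/2)$. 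This follows from monotonicity of the standard normal hazard rate $h(s) = \phi(s)/\Phi(-s)$: for $t \geq 0$ and $z > 0$,
\[
\log\Phi(-t-z) - \log\Phi(-t) = -\int_t^{t+z} h(s)\,ds \;\leq\; -\int_0^z h(s)\,ds = \log\Phi(-z) - \log\Phi(0),
\]
and applying this with $t = c/2$, $z = q$ yields $\log\Phi(-c/2 - q) - \log\Phi(-c/2) \leq \log\Phi(-q) - \log(1/2) = \log\alpha$, as required.

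Finally, consider the case where $T$ is bounded from at least one side; without loss of generality assume $b_K < \infty$ (the case $a_1 > -\infty$ is symmetric under $W \mapsto -W$). Mills-ratio expansion $\Phi(-x) \sim \phi(x)/x$ as $x \to \infty$ gives, for $\theta \gg b_K$ and $w$ in the rightmost interval $(a_K, b_K)$ of $T$ close to $b_K$,
\[
F^T_{\theta, \varsigma^2}(w) \;\sim\; \frac{\Phi((w-\theta)/\varsigma)}{\Phi((b_K-\theta)/\varsigma)} \;\sim\; \exp\!\Big(-\tfrac{(b_K-w)\theta}{\varsigma^2}\Big),
\]
where the first step uses that for large $\theta$, contributions from intervals of $T$ left of $(a_K, b_K)$ are exponentially negligible in both numerator and denominator. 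Solving $F^T_{U(w),\varsigma^2}(w) = \alpha/2$ and the analogous equation for $L(w)$ gives, as $w \uparrow b_K$, the leading-order asymptotics $U(w) - w \sim \varsigma^2\log(2/\alpha)/(b_K - w)$ and $L(w) - w \sim \varsigma^2\log(2/(2-\alpha))/(b_K - w)$, so $U(w) - L(w) \sim \varsigma^2 \log((2-\alpha)/\alpha)/(b_K - w)$. Since the density $f_W$ is continuous and bounded away from zero on $(b_K - \delta, b_K) \cap T$ for some small $\delta > 0$, and since $\int_{b_K - \delta}^{b_K}(b_K - w)^{-1}\,dw = \infty$, this forces $E[U(W) - L(W)] = \infty$. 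The main technical obstacle will be making the Mills-ratio asymptotics rigorous when $T$ has several intervals: one has to carefully control the lower-order contributions from intervals of $T$ lying to the left of $(a_K, b_K)$ and verify that they do not disturb the leading-order exponential rate $\exp(-(b_K-w)\theta/\varsigma^2)$ driving the divergence.
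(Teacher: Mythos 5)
Your proof is correct and follows essentially the same route as the paper's: the same Mills-ratio asymptotics $U(w)-L(w)\sim \varsigma^2\log((2-\alpha)/\alpha)/(b_K-w)$ combined with a lower bound on the truncated density and the divergence of $\int (b_K-w)^{-1}\,dw$ for the bounded case; the same uniform normal-tail bounds on $F^T_{\theta,\varsigma^2}$ evaluated at $\theta=U(w)$ and $\theta=L(w)$ for the first inequality; and the same reduction $p_\ast \geq 2\Phi\left(-(a_K-b_1)/(2\varsigma)\right)$ followed by a monotonicity property of the normal Mills/hazard ratio for the second. The only cosmetic differences are that you package that last monotonicity step as an integral of the hazard rate where the paper differentiates the relevant ratio directly (both ultimately rest on $\Phi(-x)\leq\phi(x)/x$), and the multi-interval control you flag as the main technical obstacle is exactly what the paper's Lemma~\ref{le:1} supplies.
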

Intuitively, one expects confidence intervals to be wide if one conditions on a 
bounded set because extreme values cannot be observed on a bounded set and the 
confidence intervals have to take this into account. We find that
the conditional expected length is infinite in this case.
If, for example, $T$ is bounded from below, i.e., if $-\infty < a_1$,
then the first statement in the proposition follows from two 
facts: First, the length of $U(w) - L(w)$ behaves like $1/(w-a_1)$ as 
$w$ approaches $a_1$ from above; and,
second, the p.d.f. of the truncated normal distribution at $w$
is bounded away from 0 zero  as $w$ approaches $a_1$ from above.
See the proof in Section 
\ref{sec:proof_prop_ci_length} for 
a more detailed version of this argument.  On the other hand, if the 
truncation set is unbounded, extreme values are observable and confidence 
intervals, therefore, do not have to be extremely wide.
The  second upper bound provided by the proposition for that case will be
useful later.

We see that
the boundedness of the truncation set $T$ is critical for the interval 
length. 
When the Lasso is used as a model selector,
this prompts  the question whether the truncation sets $T_{m,s}(z)$ 
and $T_{m}(z)$ are bounded or not, because the intervals
$[L_{m,s}(y), U_{m,s}(y)]$ and 
$[L_m(y), U_m(y)]$ are obtained
from conditional normal distributions with truncation sets
$T_{m,s}((I_n-P_{\eta^{m}})y)$ and 
$T_m((I_n-P_{\eta^m})y)$, respectively.
For $m \in \mathcal M^+\setminus\{\emptyset\}$, $s \in \mathcal S_m^+$, and $z$ 
orthogonal to $\eta^m$,
recall that $T_{m, s}(z) = (\mathcal V^-_{m,s}(z), \mathcal V^+_{m,s}(z))$,
and  that $T_m(z)$ is the union of these intervals over 
$s \in \mathcal  S^+_m$.
Write $[ \eta^m]^\perp$ for the orthogonal complement of the span
of $\eta^m$.

\begin{proposition}[The interval 
	{$[L_{\hat{m},\hat{s}}(Y), U_{\hat{m},\hat{s}}(Y)]$} 
	for the Lasso]
\label{prop:sets}
	For each $m \in \mathcal{M^+}\setminus\{\emptyset\}$ 
	and each $s \in \mathcal S_m$,
	we have 
	$$
	\forall z \in [\eta^m]^\perp: \;\;
	    -\infty < \mathcal{V}^-_{m,s}(z) \quad \text{ or }\quad
	\forall z \in [\eta^m]^\perp: \;\;
		 \mathcal{V}^+_{m,s}(z) < \infty 
	$$
	or both. 
\end{proposition}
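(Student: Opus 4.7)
The plan is to reduce the statement to a linear-algebra claim about the single vector $A_{m,s}\eta^m$. Starting from the polyhedral representation \eqref{polyh}, write $y = z + \eta^m w$ with $z \in [\eta^m]^\perp$ and $w \in \mathbb{R}$; the constraints $A_{m,s} y < b_{m,s}$ then become the scalar system $(A_{m,s}\eta^m)\,w < b_{m,s} - A_{m,s} z$. Entries of $A_{m,s}\eta^m$ that are strictly positive contribute an upper bound on $w$, strictly negative entries contribute a lower bound, and zero entries give constraints on $z$ alone. Consequently $\mathcal{V}^+_{m,s}(z) < \infty$ for every $z \in [\eta^m]^\perp$ if and only if $A_{m,s}\eta^m$ has at least one strictly positive coordinate, and $\mathcal{V}^-_{m,s}(z) > -\infty$ for every $z$ if and only if $A_{m,s}\eta^m$ has at least one strictly negative coordinate. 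The proposition thus reduces to showing $A_{m,s}\eta^m \neq 0$.

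For this, I would invoke the explicit form of $A_{m,s}$ from \cite{lee2016}, also restated in Appendix~\ref{sec:proof_prop_sets}. Its rows split into two blocks. The \emph{inactive} block has rows proportional to $x_j'(I_n - P_m)$ for $j \notin m$, where $P_m = X_m(X_m'X_m)^{-1}X_m'$. Since $\eta^m = X_m(X_m'X_m)^{-1}\gamma^m$ lies in the column span of $X_m$, one has $(I_n - P_m)\eta^m = 0$, so the inactive block annihilates $\eta^m$. The \emph{active} block equals $-\mathrm{diag}(s)(X_m'X_m)^{-1}X_m'$, and multiplying by $\eta^m$ telescopes to
\begin{equation*}
-\mathrm{diag}(s)(X_m'X_m)^{-1}\gamma^m.
\end{equation*}
Because $m \in \mathcal{M}^+$, the general-position assumption guarantees that $X_m$ has column rank $|m|$, so $(X_m'X_m)^{-1}$ is well defined and invertible; together with $\gamma^m \neq 0$ and $\mathrm{diag}(s)$ being invertible, this forces at least one entry to be nonzero. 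Hence $A_{m,s}\eta^m \neq 0$ and the proposition follows (with ``both'' occurring precisely when the vector has entries of both signs).

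The only thing that requires care is the bookkeeping around the exact form of $A_{m,s}$: one has to verify that the inactive rows really lie in the orthogonal complement of the column space of $X_m$ (so that they vanish when applied to $\eta^m$) and that the active block acts as an invertible map on $\gamma^m$. Once this is set up, the argument collapses to the observation that $\eta^m$ lives in $\mathrm{span}(X_m)$, and no properties of $b_{m,s}$ play any role, since the finiteness of $\mathcal{V}^\pm_{m,s}(z)$ depends only on the sign pattern of $A_{m,s}\eta^m$.
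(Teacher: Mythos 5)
Your proposal is correct and follows essentially the same route as the paper: both reduce the claim, via the explicit formulas for $\mathcal V^{\pm}_{m,s}(z)$, to showing that $A_{m,s}\eta^m$ (the paper uses the proportional vector $c^m = \eta^m/\|\eta^m\|^2$) has a nonzero coordinate, and both obtain this from the active block $A^1_{m,s}\eta^m = -\mathrm{diag}(s)(X_m'X_m)^{-1}\gamma^m \neq 0$ since $\gamma^m \neq 0$. Your additional observation that the inactive block annihilates $\eta^m$ is correct but not needed for the contradiction argument the paper uses.
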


For the confidence interval $[L_{\hat{m}, \hat{s}}(Y), U_{\hat{m},\hat{s}}(Y)]$,
the statement in \eqref{e1} now follows immediately:
If $m$ is a non-empty model and $s$ is a sign-vector so that the
event $\{\hat{m}=m, \hat{s}=s\}$ has positive probability,
then $m \in \mathcal M^+\setminus\{\emptyset\}$ and $s \in \mathcal S_m^+$. 
Now Proposition~\ref{prop:sets} entails
that $T_{m,s}((I_n-P_{\eta^m})Y)$ is almost surely bounded on 
the event $\{\hat{m}=m, \hat{s}=s\}$, 
and Proposition~\ref{prop:ci_length} entails
that \eqref{e1} holds.

For the confidence interval 
$[L_{\hat{m}}(Y), U_{\hat{m}}(Y)]$, we obtain that its conditional
expected length is finite, conditional on $\hat{m}=m$ with 
$m\in \mathcal M^+\setminus\{\emptyset\}$,
if and only if its corresponding truncation set $T_m(Y)$
is almost surely unbounded from above and from below on that event. 
More precisely, we have the following result.

\begin{proposition}[The interval 
	{$[L_{\hat{m}}(Y), U_{\hat{m}}(Y)]$} for the Lasso]
\label{prop:equiv}
For $m\in \mathcal M^+\setminus\{\emptyset\}$,
we have
\begin{equation}\label{e3}
\mathbb E_{\mu,\sigma^2}[ U_{\hat{m}}(Y) - L_{\hat{m}}(Y) | \hat{m} = m] 
	\quad=\quad \infty
\end{equation}
if and only if  there exists a $s\in \mathcal S_m^+$ and a
vector $y$ satisfying $A_{m,s}y < b_{m,s}$, so that
\begin{equation}\label{e4}
T_{m}( (I_n - P_{\eta^{m}}) y) \text{ is bounded from above or from below}.
\end{equation}
\end{proposition}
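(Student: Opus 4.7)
The plan is to reduce to Proposition~\ref{prop:ci_length} by iterated expectations, using the decomposition of $Y$ into $P_{\eta^m}Y$ and $(I_n - P_{\eta^m})Y$. Arguing exactly as in the derivation of \eqref{likeW}, but conditioning on $\hat{m}=m$ alone (rather than also on $\hat{s}$), the conditional distribution of $\eta^m{}'Y$ given $\{\hat{m}=m,\, (I_n - P_{\eta^m})Y = z\}$ is $TN(\eta^m{}'\mu, \sigma_m^2, T_m(z))$, so $[L_m(Y), U_m(Y)]$ is precisely the interval $[L(W),U(W)]$ of Proposition~\ref{prop:ci_length} applied to this truncated normal. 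Proposition~\ref{prop:ci_length} applied in each $z$-slice then dichotomizes: the $z$-slice conditional mean length is infinite when $T_m(z)$ is bounded from one side, and finite when $T_m(z)$ is unbounded from both sides. Hence \eqref{e3} holds iff the set of $z$'s with $T_m(z)$ bounded from above or from below has positive probability under the conditional law of $(I_n - P_{\eta^m})Y \mid \hat{m}=m$.

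For the ``if'' direction, I would start from a witness pair $(s^*, y^*)$ with $A_{m,s^*}y^* < b_{m,s^*}$ such that $T_m(z^*)$ is bounded from above, where $z^* = (I_n-P_{\eta^m})y^*$ (the ``bounded from below'' case is symmetric). Boundedness from above of $T_m(z^*) = \bigcup_{s \in \mathcal{S}_m^+} T_{m,s}(z^*)$ amounts to the uniform bound $\max_{s \in \mathcal{S}_m^+} \mathcal{V}^+_{m,s}(z^*) < \infty$. The key step is to invoke continuity in $z$ of each $\mathcal{V}^+_{m,s}(\cdot)$ --- they are piecewise linear, via the explicit formulas reproduced in Appendix~\ref{sec:proof_prop_sets} from Lemma~4.1 of \cite{lee2016} --- to extend this uniform finiteness to an open neighborhood $U$ of $z^*$, chosen to lie inside the projection of $\{A_{m,s^*}y < b_{m,s^*}\}$ onto $[\eta^m]^\perp$. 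Since $s^* \in \mathcal{S}_m^+$ and the conditional density of $(I_n - P_{\eta^m})Y \mid \hat{m}=m$ is strictly positive on $U$, this supplies a positive-probability set of $z$'s on which $T_m(z)$ is bounded from above; iterating the first conclusion of Proposition~\ref{prop:ci_length} then yields \eqref{e3}.

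For the ``only if'' direction I would argue by contrapositive: if $T_m((I_n - P_{\eta^m})y)$ is unbounded from both sides for every $s \in \mathcal{S}_m^+$ and every $y$ with $A_{m,s}y < b_{m,s}$, then almost surely on $\{\hat{m}=m\}$ the truncation set $T_m((I_n - P_{\eta^m})Y)$ is unbounded from both sides, and the second bound of Proposition~\ref{prop:ci_length} gives
$$U_m(Y) - L_m(Y) \;\leq\; 2\sigma_m \Phi^{-1}(1-\alpha/2) \;+\; G(Y),$$
where $G(Y)$ is the interior gap $a_K - b_1$ of $T_m((I_n-P_{\eta^m})Y)$. Since the finite endpoints here are piecewise linear functions of $Y$ drawn from the finite family $\{\mathcal{V}^\pm_{m,s}\}_{s \in \mathcal{S}_m^+}$, $G(Y)$ is dominated by a finite max of linear forms in $Y$, which has finite Gaussian expectation under $\mathbb{P}_{\mu,\sigma^2}$; absolute continuity of the conditional law on $\{\hat{m}=m\}$ with respect to $\mathbb{P}_{\mu,\sigma^2}$ then yields a finite conditional mean. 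The main obstacle I anticipate is the continuity/persistence step in the ``if'' direction: passing from a single witness $z^*$ to a set of positive probability requires handling the joint finiteness of several $\mathcal{V}^+_{m,s}(z^*)$'s via their piecewise-linear structure, rather than just one as in Proposition~\ref{prop:sets}.
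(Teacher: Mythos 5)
Your proposal is correct and takes essentially the same route as the paper's proof: the ``if'' direction extends the witness point to an open, positive-probability neighborhood via the piecewise-linear continuity of the endpoints $\mathcal V^{\pm}_{m,s}$ and then invokes the first part of Proposition~\ref{prop:ci_length}, while the ``only if'' direction uses the second bound of Proposition~\ref{prop:ci_length} and dominates the gap $a_K-b_1$ by a max/min of finitely many linear (hence Gaussian, integrable) functions of $Y$. The slicewise iterated-expectation framing and the anticipated ``obstacle'' of joint finiteness of the $\mathcal V^+_{m,s}$ are exactly what the paper handles in its preparatory remark before the proof.
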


In order to infer \eqref{e3} from \eqref{e4},
that latter condition needs to be checked for 
every point $y$  in a union of polyhedra.
While this
is easy in some simple examples like, say, the situation depicted in
Figure 1 of \cite{lee2016}, searching over polyhedra in $\mathbb R^n$ is
hard in general. In practice, one can use a simpler sufficient
condition that implies \eqref{e3}: After observing the data, i.e., after
observing a particular value $y^\ast$ of $Y$, and hence also observing
$\hat{m}(y^\ast)=m$ and $\hat{s}(y^\ast)=s$, we check whether
$T_m((I_n-P_{\eta^m})y^\ast)$ is bounded from above or from below
(and also whether  $m\neq \emptyset$ and whether
$A_{m,s}y^\ast < b_{m,s}$, which, if satisfied, entails that
$m\in \mathcal M^+$ and that
$s\in \mathcal S_m^+$).
If this is the case, then it follows, ex post,
that \eqref{e3} holds. Note that these computations occur naturally during
the computation of $[L_m(y^\ast), U_m(y^\ast)]$
and can hence be performed as a safety
precaution with little extra effort.

The next result shows that the expected length of 
$[L_{\hat{m}}(Y), U_{\hat{m}}(Y)]$ is typically finite
conditional on $\hat{m}=m$ if the selected model $m$
is either extremely large or extremely small.

\begin{corollary}[The interval 
	{$[L_{\hat{m}}(Y), U_{\hat{m}}(Y)]$} for the Lasso]
\label{corollary}
If $|m|=0$ or  $|m| = 1$, we always have
$E_{\mu,\sigma^2}[ U_{\hat{m}}(Y) - L_{\hat{m}}(Y) | \hat{m} = m] < \infty$;
the same is true if $|m| = p$ for 
Lebesgue-almost all $\gamma^m$ (recall that $[L_{\hat{m}}(Y), U_{\hat{m}}(Y)]$
is meant to cover $\gamma^m\mbox{}' \beta^m$ conditional on
$\hat{m}=m$).
\end{corollary}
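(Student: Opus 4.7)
The plan is to deduce both parts of the corollary from Proposition~\ref{prop:equiv}: for $m\in\mathcal{M}^+\setminus\{\emptyset\}$, I need to verify that $T_m\bigl((I_n-P_{\eta^m})y\bigr)$ is unbounded both from above and from below for every $s\in\mathcal{S}_m^+$ and every $y$ with $A_{m,s}y<b_{m,s}$. The case $|m|=0$ is immediate, since $[L_\emptyset(Y),R_\emptyset(Y)]$ was defined to equal $\{0\}$ or $\{1\}$, so its length is a.s.\ bounded.

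For $|m|=1$, write $m=\{j\}$. Since $\eta^m=x_j\gamma^m/(x_j'x_j)\in\mathrm{span}(x_j)$, one has $P_{\eta^m}=P_{x_j}$, so substituting $y=z+\eta^m w$ into the explicit polyhedral representation of $\{\hat m=m,\hat s=s\}$ splits the constraints into: (i)~the active-sign inequality $s\,x_j'y>\lambda$, which becomes the one-sided condition $s\gamma^m w>\lambda-s\,x_j'z$ pointing to $\mathrm{sign}(s\gamma^m)\cdot\infty$; and (ii)~the inactive-variable inequalities $|x_i'z+\lambda s\,x_i'x_j/(x_j'x_j)|<\lambda$ for $i\neq j$, which depend only on $z$ and $s$ because $(I-P_{x_j})\eta^m=0$ kills the $w$-contribution. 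Hence each nonempty $T_{m,s}(z)$ is a ray extending to $\mathrm{sign}(s\gamma^m)\cdot\infty$. The $y\mapsto-y$, $s\mapsto-s$ symmetry of the KKT polyhedra yields $\{-1,+1\}\subseteq\mathcal{S}_m^+$ whenever $m\in\mathcal{M}^+$, and combining this with a direct check that both inactive-constraint systems are simultaneously satisfied at every relevant $z$ shows that $T_m(z)=T_{m,+1}(z)\cup T_{m,-1}(z)$ contains rays in both directions. Proposition~\ref{prop:equiv} then closes the case.

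For $|m|=p$ there are no inactive constraints. Set $M=(X'X)^{-1}$; using $X'\eta^m=X'XM\gamma^m=\gamma^m$, the sign constraints $s_j\hat\beta_{m,j}>0$ become, after substituting $y=z+\eta^m w$, the componentwise inequalities $s_j(M\gamma^m)_j\,w>c_j(z,s)$, with $c_j$ affine in $z$ and independent of $w$. For Lebesgue-almost every $\gamma^m\in\mathbb R^p$ the vector $M\gamma^m$ has no zero coordinate; setting $s^\ast=\mathrm{sign}(M\gamma^m)$ makes every coefficient $s_j^\ast(M\gamma^m)_j$ strictly positive, so $T_{m,s^\ast}(z)$ is a ray to $+\infty$ (hence nonempty) for every $z$, and symmetrically $T_{m,-s^\ast}(z)$ is a ray to $-\infty$. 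The sign-flip symmetry gives $\pm s^\ast\in\mathcal{S}_m^+$, so $T_m(z)$ is unbounded from above and below for every $z$, and Proposition~\ref{prop:equiv} again yields finite conditional expected length.

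The main obstacle is the simultaneous-feasibility step in the $|m|=1$ case: one must verify that whenever the inactive-constraint box for sign $s$ contains a given $z=(I_n-P_{\eta^m})y$, so does the box for the opposite sign $-s$. The two boxes differ only by the fixed translation with $i$-th coordinate $2\lambda\,x_i'x_j/(x_j'x_j)$, and the argument must combine this observation with the explicit half-widths $\lambda$ of the boxes to conclude. This is precisely the mechanism that breaks down for intermediate models $1<|m|<p$, consistent with the dichotomy documented in Figure~\ref{fig:heatmap}.
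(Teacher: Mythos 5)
Your reduction of everything to Proposition~\ref{prop:equiv}, and your handling of the cases $|m|=0$ and $|m|=p$, essentially reproduce the paper's proof (for $|m|=p$ the paper also picks the sign vectors making all components of $A^1_{m,s}c^m$ of one sign, though it establishes $s^\ast\in\mathcal S_m^+$ by exhibiting an explicit point $y^+$ with $A_{m,s^\ast}y^+<b_{m,s^\ast}$ and an open ball around it, rather than by ``sign-flip symmetry'', which by itself only shows that $\mathcal S_m^+$ is closed under negation). The genuine problem is the $|m|=1$ case: the step you yourself flag as the main obstacle --- ``a direct check that both inactive-constraint systems are simultaneously satisfied at every relevant $z$'' --- is not merely unverified, it is false in general. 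Writing $\rho_i=x_i'x_j/\lVert x_j\rVert^2$, the inactive-constraint box for sign $s$ is $\{z:\lvert x_i'z/\lambda+s\rho_i\rvert<1 \ \forall i\neq j\}$; the boxes for $s$ and $-s$ are translates of one another by $2\lambda\rho$, so one contains the other only when $\rho=0$, i.e., when $x_j$ is orthogonal to every other column. Even when $\lvert\rho_i\rvert<1$ for all $i$, a point $z$ near the boundary of the $s$-box lies outside the $-s$-box; and if $\lvert x_i'x_j\rvert\geq\lVert x_j\rVert^2$ for some $i$ (perfectly compatible with columns in general position) the two boxes are disjoint, so \emph{no} relevant $z$ satisfies both systems. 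The argument cannot be completed along the route you describe.

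The paper's proof never faces this issue because of how $T_{m,s}(z)$ is defined there: it is the interval $(\mathcal V^-_{m,s}(z),\mathcal V^+_{m,s}(z))$, and the formulas in Appendix~\ref{sec:proof_prop_sets} show that $\mathcal V^\pm_{m,s}(z)$ are built only from the rows of $A_{m,s}$ with $(A_{m,s}c^m)_i\neq 0$. For $|m|=1$ the inactive-variable block satisfies $A^0_{m,s}c^m=0$, so those rows are absorbed into $\mathcal V^0_{m,s}(z)$ and never enter $T_{m,s}(z)$; only the single active-sign row $A^1_{m,s}$ remains, and since $A^1_{m,1}c^m=-A^1_{m,-1}c^m\neq 0$, the sets $T_{m,+1}(z)$ and $T_{m,-1}(z)$ are always two oppositely directed rays whose union is unbounded above and below --- no feasibility check on the inactive constraints is needed. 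By instead interpreting $T_{m,s}(z)$ as the actual section $\{w: A_{m,s}(z+\eta^m w)<b_{m,s}\}$, which is empty when the inactive constraints fail at $z$, you have set yourself the task of proving an intermediate claim that is false for some designs $X$. To repair the write-up, adopt the paper's formula-based definition of $T_{m,s}(z)$ and delete the simultaneous-feasibility step; the case $|m|=1$ then follows in two lines from $A^0_{m,\pm 1}c^m=0$ and $A^1_{m,1}c^m=-A^1_{m,-1}c^m\neq 0$.
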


The corollary raises the suspicion that the conditional
expected length of $[L_{\hat{m}}(Y), U_{\hat{m}}(Y)]$ could also 
be finite
if the selected model $m$ either
includes almost no regressor ($|m|$ close to zero) or
excludes almost no regressor ($|m|$ close to $p$).
Our simulations seem to support this; cf. Figure~\ref{fig:heatmap}.
The statement concerning Lebesgue-almost all $\gamma^m$ 
does not necessarily hold for all $\gamma^m$; see Remark~\ref{example}. 
Also note that the case where $|\hat{m}| = p$ can only occur if
$p\leq n$,
because the Lasso only selects models with no more than $n$ variables
here.

\begin{remark} \normalfont\label{r2}
	We stress that property \eqref{e3} or, equivalently, \eqref{e4},
	only depends  on the selected model $m$ and on the regressor matrix
	$X$ but not on the parameters $\mu$ and $\sigma^2$ (which govern
	the distribution of $Y$). These parameters will, of course,
	impact the probability that the model $m$ is selected in the first 
	place. But conditional on  $\hat{m}=m$, they have no influence on 
	whether or not the interval $[L_{\hat{m}}(Y), U_{\hat{m}}(Y)]$ has
	infinite expected length.
\end{remark}

\subsection{Quantiles of confidence interval length}

Both the intervals $[L_{\hat{m},\hat{s}}(Y), U_{\hat{m},\hat{s}}(Y)]$
and $[L_{\hat{m}}(Y), U_{\hat{m}}(Y)]$ are based on
a confidence interval derived from the
truncated normal distribution. We therefore
first study the length of the latter through its quantiles and then
discuss the implications of our findings for
the intervals $[L_{\hat{m},\hat{s}}(Y), U_{\hat{m},\hat{s}}(Y)]$
and $[L_{\hat{m}}(Y), U_{\hat{m}}(Y)]$.

Consider $W\sim TN(\theta,\varsigma^2,T)$ with 
$T\neq\emptyset$ being the union of finitely many open intervals,
and recall that $[L(W),U(W)]$ covers $\theta$ with probability $1-\alpha$.
Define $q_{\theta,\varsigma^2}(\kappa)$ through
$$
q_{\theta,\varsigma^2}(\kappa)\quad=\quad
\inf\big\{ x\in \mathbb R: P( U(W)-L(W) \leq x)\geq \kappa\big\}
$$
for $0< \kappa < 1$;
i.e., $q_{\theta,\varsigma^2}(\kappa)$ is the $\kappa$-quantile
of the length of $[L(W), U(W)]$.
If $T$ is unbounded from above and from below, then
$U(W) - L(W)$ is bounded (almost surely) by Proposition~\ref{prop:ci_length};
in this case, $q_{\theta,\varsigma^2}(\kappa)$ is trivially bounded in
$\kappa$. 
For the remaining case, i.e., if $T$ is bounded 
from above or from below, we have
$E[U(W)-L(W)] = \infty$ by Proposition~\ref{prop:ci_length}, and
the following results provides an approximate lower 
bound for the $\kappa$-quantile $q_{\theta,\varsigma^2}(\kappa)$
for $\kappa$ close to $1$.

\begin{proposition}\label{prop:quant}
If $b = \sup T < \infty$, then
$$
	r_{\theta,\varsigma^2}(\kappa) \quad=\quad
		\frac{
		\varsigma \log(\frac{2-\alpha}{\alpha}) 
		}
		{1-\kappa}
		\frac{\phi( \frac{b-\theta}{\varsigma})}{
			\Phi( \frac{b-\theta}{\varsigma})}
$$
is an asymptotic lower bound for $q_{\theta,\varsigma^2}(\kappa)$
in the sense that 
$\limsup_{\kappa\nearrow 1} r_{\theta,\varsigma^2}(\kappa) /
	q_{\theta,\varsigma^2}(\kappa)  \leq 1$.
If $a = \inf T > -\infty$, then this statement continues to hold
if, in the definition of $r_{\theta,\varsigma^2}(\kappa)$, the
last fraction is replaced by 
$\phi((a-\theta)/\varsigma) / (1-\Phi((a-\theta)/\varsigma)$.
\end{proposition}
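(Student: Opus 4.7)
The plan is to prove the quantile lower bound by (a) pinning down the exact divergence rate of $U(w) - L(w)$ as $w \nearrow b$, (b) lower-bounding the mass that $W$ places near $b$, and (c) inverting the resulting tail bound to read off a bound on $q_{\theta,\varsigma^2}(\kappa)$. I will focus on the case $b = \sup T < \infty$; the case $a = \inf T > -\infty$ follows by the symmetry $W \mapsto -W$, $\theta \mapsto -\theta$, $T \mapsto -T$, which exchanges $\Phi((b-\theta)/\varsigma)$ with $\Phi((\theta-a)/\varsigma) = 1 - \Phi((a-\theta)/\varsigma)$ and leaves the (symmetric) $\phi$ invariant.

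First I will establish
\begin{equation*}
U(w) - L(w) \;=\; \frac{\varsigma^{2}\,\log\frac{2-\alpha}{\alpha}}{b-w}\,(1+o(1)) \qquad (w \nearrow b).
\end{equation*}
By the strict monotonicity of $F^{T}_{\theta,\varsigma^{2}}(w)$ in $\theta$ (already invoked in Section~\ref{sec:notation} to define $L$ and $U$) and the fact that $F^{T}_{\theta,\varsigma^{2}}(w) \to 1$ as $w \nearrow b$ for every fixed $\theta$, both $L(w)$ and $U(w)$ must diverge to $+\infty$. With $L$ large, I would write $F^{T}_{L,\varsigma^{2}}(w)$ as a ratio of sums of $\Phi$-differences over the intervals of $T$ and apply the Mills expansion $\Phi(-t) \sim \phi(t)/t$; the identity $\phi((L-y)/\varsigma)/\phi((L-b)/\varsigma) = \exp(-(b-y)(2L-b-y)/(2\varsigma^{2}))$ shows that every term anchored at a point $y < b$ is exponentially dominated by the term anchored at $b$. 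The CDF then collapses to $F^{T}_{L,\varsigma^{2}}(w) \sim \Phi((w-L)/\varsigma)/\Phi((b-L)/\varsigma) \sim \exp(-L(b-w)/\varsigma^{2})$, and solving $F^{T}_{L(w),\varsigma^{2}}(w) = 1-\alpha/2$ to leading order yields $L(w) = -\varsigma^{2}\log(1-\alpha/2)/(b-w) + o(1/(b-w))$; analogously $U(w) = -\varsigma^{2}\log(\alpha/2)/(b-w) + o(1/(b-w))$, and subtraction gives the claim.

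Next, since the rightmost interval of $T$ is $(a_{K}, b)$, a direct integration of the $TN(\theta,\varsigma^{2},T)$ density yields, for $\delta \downarrow 0$,
\begin{equation*}
P(b-W < \delta) \;\sim\; \frac{\delta\,\phi((b-\theta)/\varsigma)}{\varsigma\,P(N(\theta,\varsigma^{2}) \in T)} \;\ge\; \frac{\delta\,\phi((b-\theta)/\varsigma)}{\varsigma\,\Phi((b-\theta)/\varsigma)}\,(1+o(1)),
\end{equation*}
the inequality because $T \subseteq (-\infty, b)$. For any $\epsilon > 0$ and all sufficiently small $\delta$, the length asymptotic gives $\{b-W < \delta\} \subseteq \{U(W)-L(W) > (1-\epsilon)\,\varsigma^{2}\log\tfrac{2-\alpha}{\alpha}/\delta\}$, and substituting $\delta = (1-\epsilon)\varsigma^{2}\log\tfrac{2-\alpha}{\alpha}/x$ produces
\begin{equation*}
\liminf_{x \to \infty} x \cdot P(U(W)-L(W) > x) \;\ge\; (1-\epsilon)\,\frac{\varsigma\,\log\frac{2-\alpha}{\alpha}\,\phi((b-\theta)/\varsigma)}{\Phi((b-\theta)/\varsigma)}.
\end{equation*}
The right-hand side equals $(1-\epsilon)(1-\kappa)\,r_{\theta,\varsigma^{2}}(\kappa)$ for every $\kappa$. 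A routine quantile inversion completes the argument: for each $\epsilon_{2} \in (0,1)$, this forces $P(U(W)-L(W) > (1-\epsilon_{2})\,r_{\theta,\varsigma^{2}}(\kappa)) > 1-\kappa$ for $\kappa$ close enough to $1$, hence $q_{\theta,\varsigma^{2}}(\kappa) > (1-\epsilon_{2})\,r_{\theta,\varsigma^{2}}(\kappa)$, and sending $\epsilon_{2} \downarrow 0$ yields $\limsup_{\kappa \nearrow 1} r_{\theta,\varsigma^{2}}(\kappa)/q_{\theta,\varsigma^{2}}(\kappa) \le 1$.

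The hard part will be the length asymptotic: controlling the Mills remainder uniformly enough to invert the CDF equation to leading order, and verifying that the contributions from every interval of $T$ other than the rightmost, as well as from the lower endpoint $a_{K}$, are indeed exponentially dominated by the term anchored at $b$. Once this bookkeeping is in place, both the density estimate and the final quantile inversion are essentially routine calculations.
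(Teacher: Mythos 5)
Your proposal is correct and follows essentially the same route as the paper: the exact rate $U(w)-L(w)\sim\varsigma^{2}\log(\tfrac{2-\alpha}{\alpha})/(b-w)$ is the paper's Lemma~\ref{le:5} (proved there via Lemma~\ref{le:1} and the Mills ratio, i.e., precisely the exponential-domination bookkeeping you defer to the end), your local mass estimate near $b$ is Lemma~\ref{le:6} (stated there for the inverse c.d.f.\ rather than the tail probability), and your $\epsilon$/$\epsilon_{2}$ tail inversion is Lemma~\ref{le:7}. The only point to watch is that the final inversion requires choosing $\epsilon<\epsilon_{2}$ before sending $\epsilon_{2}\downarrow 0$, which your ordering of quantifiers permits.
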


We see that
$q_{\theta,\varsigma^2}(\kappa)$
goes to infinity  at least as fast as  $O(1/(1-\kappa))$
as $\kappa$ approaches $1$ if $T$ is bounded.
Moreover, if $b=\sup T < \infty$, then $r_{\theta,\varsigma^2}(\kappa)$
goes to infinity as $O(\theta)$ as $\theta \to \infty$ (cf. the end 
of the proof of Lemma~\ref{le:5} in the appendix), and a similar
phenomenon occurs if $a=\inf T >-\infty$ and as $\theta\to-\infty$.
[In a model-selection context, 
the case where $\theta \not\in T$ often corresponds to the situation where
the selected model is incorrect.]
The approximation provided by Proposition~\ref{prop:quant}
is visualized in Figure~\ref{fig:quantapprox} for some
specific scenarios.
\begin{figure}[h!] 
	\begin{center}
	\includegraphics[width=0.8\textwidth]{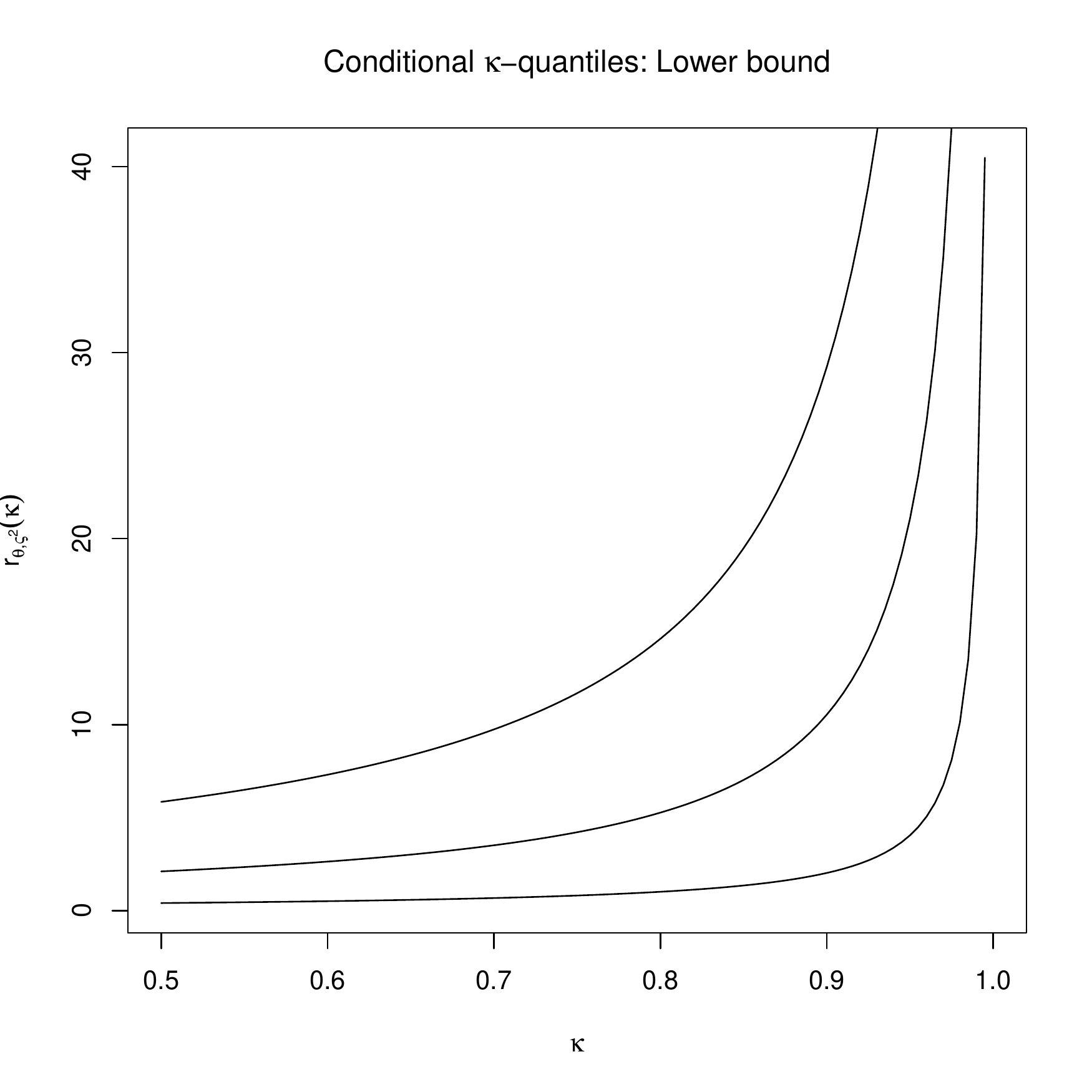} 
	\end{center}
	\caption{Approximate lower bound for $q_{\theta,\varsigma^2}(\kappa)$
	from Proposition~\ref{prop:quant}   for
	$\alpha=0.05$, $T=(-\infty,0]$ and $\varsigma^2=1$.
	Starting from the bottom, the curves correspond to
	$\theta=-2, -1, 0$.
}\label{fig:quantapprox}
\end{figure}

Proposition~\ref{prop:quant} also provides an approximation to the
quantiles of $U_{\hat{m}, \hat{s}}(Y) - L_{\hat{m},\hat{s}}(Y)$,
conditional on the event
$\{ \hat{m}=m,\hat{s}=s,(I_n-P_{\eta^m})Y = z\}$ whenever
$m\in \mathcal M^+\setminus\{\emptyset\}$ and $s\in \mathcal S_m^+$.
Indeed, the corresponding $\kappa$-quantile is equal to
$q_{\eta^m\mbox{}' \mu, \sigma_m^2, T_{m,s}(z)}(\kappa)$ 
in view of \eqref{likeW} and by construction,
and Proposition~\ref{prop:quant} provides an asymptotic lower bound.
In a similar fashion, the $\kappa$-quantile of
$U_{\hat{m}}(Y) - L_{\hat{m}}(Y)$ conditional
on the event $\{\hat{m}=m, (I_n-P_{\eta^m})Y=z\}$ is given by 
$q_{\eta^m\mbox{}' \mu, \sigma_m^2, T_{m}(z)}(\kappa)$
whenever $m\in \mathcal M^+\setminus\{\emptyset\}$
and \eqref{e3} holds.
Approximations to the quantiles of
$U_{\hat{m},\hat{s}}(Y) - L_{\hat{m},\hat{s}}(Y)$ conditional on smaller events
like $\{\hat{m}=m, \hat{s}=s\}$ or $\{\hat{m}=m\}$
are possible but would involve integration
over the range of $z$ in the conditioning events; in other words,
such approximations would depend on the particular geometry of the 
polyhedron $\{\hat{m}=m, \hat{s}=s\} \subseteq \mathbb R^n$;
cf. \eqref{polyh}. 
Similar considerations apply to the quantiles of
$U_{\hat{m}}(Y) - L_{\hat{m}}(Y)$.
However,
comparing Figure~\ref{fig:quantapprox} 
with the simulation results in Figure~\ref{fig:quantsim} of
Section~\ref{sim:quantiles}, we see that the behavior
of $r_{\theta,\varsigma^2}(\kappa)$
also is qualitatively similar to the behavior of 
unconditional $\kappa$-quantiles obtained through simulation, at least for
$\kappa$ close to $1$.

\begin{remark} \normalfont\label{r3}
	If $\tilde{m}$ is any other model selection procedure,
	so that the event $\{\tilde{m} = m\}$ is
	the union of a finite number of polyhedra (up to null sets), 
	then the polyhedral method
	can be applied to obtain a confidence set for $\eta^m\mbox{}' \mu$
	with conditional coverage probability $1-\alpha$, conditional
	on the event $\{\tilde{m}=m\}$ if that event has positive probability.
	In that case, Proposition~\ref{prop:equiv} and
	Proposition~\ref{prop:quant} can be used to analyze the length
	of corresponding confidence intervals that are based on the
	polyhedral method:
	Clearly, for such a model selection procedure, an equivalence
	similar to \eqref{e3}--\eqref{e4} in Proposition~\ref{prop:equiv}
	holds, with the Lasso-specific set $T_m((I_n-P_{\eta^m})y)$
	replaced by a similar set that depends on the event $\{\tilde{m}=m\}$.
	And conditional quantiles of confidence interval length
	are again of the form $q_{\theta,\varsigma^2}(\kappa)$ 
	for appropriate choice of $\theta$, $\varsigma^2$ and $T$,
	for which Proposition~\ref{prop:quant} provides an approximate
	lower bound; cf. the discussion following the proposition.
	Examples include
	\citet[Section 5]{fithian2015},
	\citet[Section 4]{fithian2017} or
	\citet[Section 6]{reid2015}.
	See also
	\citet[Section 3.1]{tian2017b}	and 	
	\citet[Section 5.1]{gross2015},  	
	where the truncated normal distribution is
	replaced by truncated $t$- and $F$-distributions,
	respectively.
\end{remark}

\subsection{The unknown variance case} 
\label{unknownsigma}

Suppose here that $\sigma^2>0$ is unknown and that $\hat{\sigma}^2$ is 
an estimator
for $\sigma^2$. Fix $m \in \mathcal M^+\setminus\{\emptyset\}$ 
and $s \in \mathcal S_m^+$.
Note that the set $A_{m,s} y < b_{m,s}$ does not depend on $\sigma^2$
and hence also $\mathcal V^-_{m,s}((I_n - P_{\eta^m})y)$ and
$V^-_{m,s}((I_n - P_{\eta^m})y)$ do not depend on $\sigma^2$.
For each $\varsigma^2>0$
and for each $y$ so that $A_{m,s} y < b_{m,s}$ define
$L_{m,s}(y,\varsigma^2)$,
$U_{m,s}(y,\varsigma^2)$,
$L_{m}(y,\varsigma^2)$, and
$U_{m}(y,\varsigma^2)$
like 
$L_{m,s}(y)$,
$U_{m,s}(y)$,
$L_{m}(y)$, and
$U_{m}(y)$
in Section~\ref{sec:notation} with
$\varsigma^2$ replacing $\sigma^2$ in the formulas.
(Note that, say, $L_{m,s}(y)$ depends on $\sigma^2$
through $\sigma^2_m = \sigma^2 \eta^m\mbox{}' \eta^m$.)
The asymptotic coverage probability of the intervals
$[L_{m,s}(Y,\hat{\sigma}^2),U_{m,s}(Y,\hat{\sigma}^2)]$ and
$[L_{m}(Y,\hat{\sigma}^2), U_{m}(Y,\hat{\sigma}^2)]$,
conditional on the events $\{\hat{m}=m, \hat{s}=s\}$
and $\{\hat{m}=m\}$, respectively,
is discussed in \cite{lee2016}.

If $\hat{\sigma}^2$ is independent of $\eta^{m}\mbox{}' Y$
and positive with positive probability, then it is easy to see that
\eqref{e1} continues to hold with 
$[L_{m,s}(Y,\hat{\sigma}^2),U_{m,s}(Y,\hat{\sigma}^2)]$ 
replacing 
$[L_{m,s}(Y),U_{m,s}(Y)]$ 
for each $m\in \mathcal M^+$ and each $s \in \mathcal S_m^+$.
And if, in addition, $\hat{\sigma}^2$ has finite mean
conditional on the event $\{\hat{m}=m\}$ for $m\in \mathcal M^+$,
then it is elementary to verify that the equivalence \eqref{e3}--\eqref{e4}
continues to hold with
$[L_{m}(Y,\hat{\sigma}^2),U_{m}(Y,\hat{\sigma}^2)]$ 
replacing 
$[L_{m}(Y),U_{m}(Y)]$ 
(upon repeating the arguments following \eqref{e3}--\eqref{e4} 
and upon using the finite conditional mean of $\hat{\sigma}^2$
in the last step).

In the case where $p<n$, the usual variance estimator
$\| Y - X (X'X)^{-1} X'Y\|^2 / (n-p)$ is independent of
$\eta^m\mbox{}'Y$, is positive with probability $1$ and
has finite unconditional (and hence also conditional) mean.
For variance estimators in the case where $p\geq n$, we refer
to \cite{lee2016} and the references therein.

\section{Simulation results}
\label{sim}

\subsection{Mean of $U_{\hat{m}} - L_{\hat{m}}$}
\label{sim:length}

We seek to investigate whether or not the expected length
of $[L_{\hat{m}}, U_{\hat{m}}]$ is typically infinite, i.e.,
to which extent the property of the interval 
$[L_{\hat{m},\hat{s}}, U_{\hat{m},\hat{s}}]$, as described in
Proposition~\ref{prop:sets}, carries over to 
$[L_{\hat{m}}, U_{\hat{m}}]$, which is characterized in 
Proposition~\ref{prop:equiv}.
To this end, we perform an exploratory simulation  exercise consisting 
of 500 repeated
samples of size $n=100$ for various configurations of $p$ and $\lambda$,
i.e., for models with varying number of parameters $p$ and
for varying choices of the tuning parameter $\lambda$. 
The quantity
of interest here is the first component of the parameter corresponding to the 
selected model.
For each sample $y\in {\mathbb R}^{n}$, 
we compute the Lasso estimator $\hat{\beta}(y)$,
the selected model $\hat{m}(y)$, and the confidence interval
$[L_{\hat{m}}(y), U_{\hat{m}}(y)]$ for $\beta^{\hat{m}(y)}_1$.
Lastly, we check whether  $|m|>1$ and whether
the sufficient condition for infinite expected length outlined after
Proposition~\ref{prop:equiv} is satisfied. If so,
the interval $[L_{\hat{m}}(Y), U_{\hat{m}}(Y)]$ 
is guaranteed to have infinite expected length
conditional on the event $\hat{m}(Y)=m$,
irrespective of the true parameters in the model.
The results, averaged over 500 repetitions for each configuration of 
$p$ and $\lambda$, are reported in Figure~\ref{fig:heatmap}.
\begin{figure}[h!] 
	\begin{center}
	\includegraphics[width=0.8\textwidth]{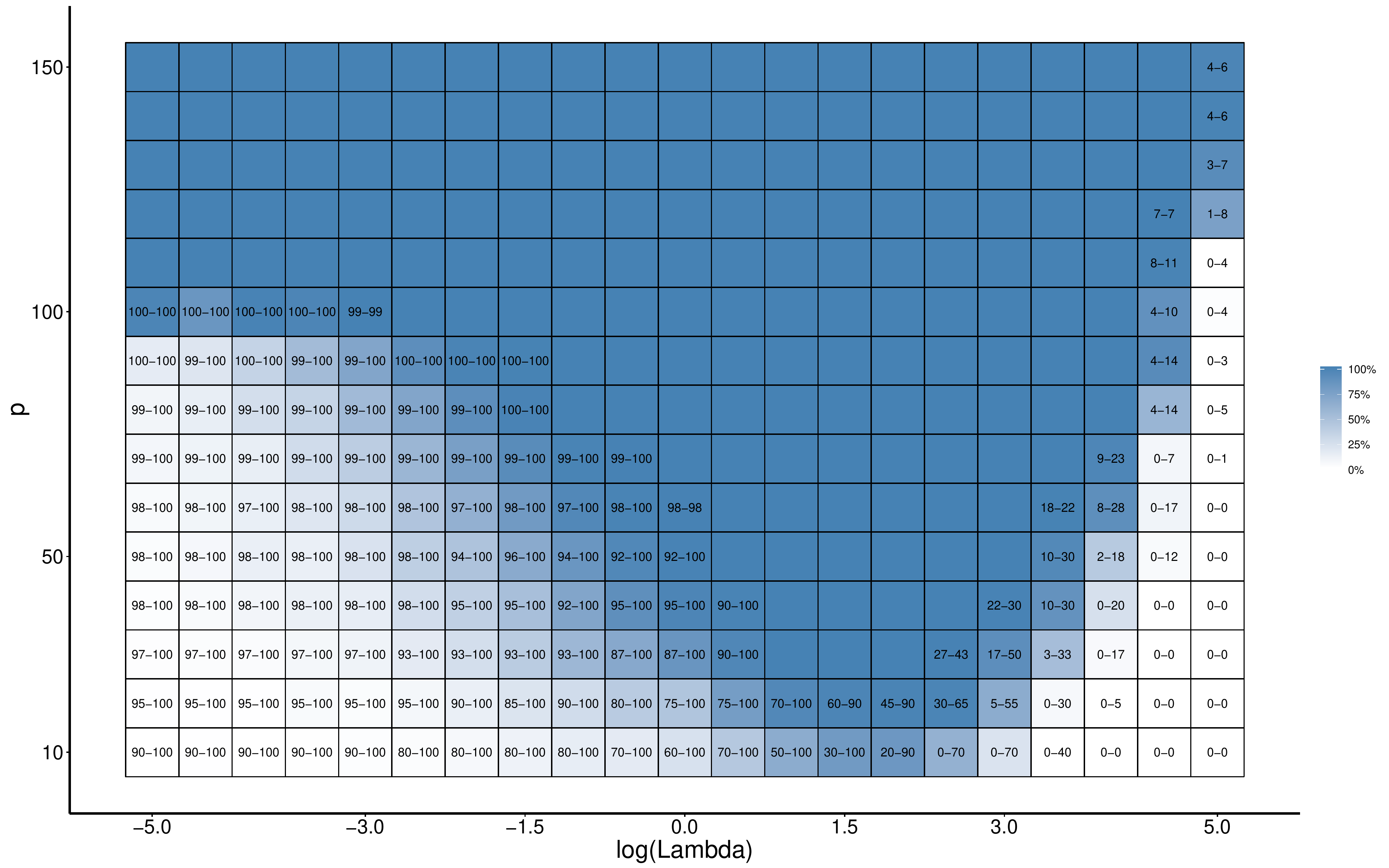} 
	\end{center}
	\caption{Heat-map showing the fraction of cases (out of 500 runs)
	in which we found a model $m$ for which the confidence interval 
	$[L_{\hat{m}}(Y), U_{\hat{m}}(Y)]$
	for $\beta^{\hat{m}}_1(Y)$ is guaranteed to have
	infinite expected length conditional on $\hat{m}=m$, for various
	values of $p$ and $\lambda$.
	For those cases where infinite expected length is not guaranteed,
	the number
	in the corresponding cell shows the percentage of variables (out of $p$)
	in the smallest and in the largest selected model.
	}\label{fig:heatmap}
\end{figure} 

We see that the conditional expected length
of $[L_{\hat{m}}(Y), U_{\hat{m}}(Y)]$ is guaranteed to be infinite
in a substantial number of cases 
(corresponding to the blue cells in the figure).
The white cells correspond to cases where the sufficient condition for
infinite expected length is not met. These correspond to simulation
scenarios where either (a) $p\leq n$ and $\lambda$ is quite small
or (b) $\lambda$ is quite large. In the first (resp. second) case, most
regressors are included (resp. excluded) in the selected model 
with high probability.

A more detailed description of the simulation underlying 
Figure~\ref{fig:heatmap} is as follows: For each simulation scenario,
i.e., for each cell in the figure, 
we generate an $n\times p$ regressor
matrix $X$ whose rows are independent realizations of 
a $p$-variate Gaussian distribution
with mean zero, so that the diagonal elements of the covariance matrix 
all equal $1$ and the off-diagonal elements all equal $0.2$.
Then we choose a vector $\beta \in {\mathbb R}^p$ so that the first
$p/2$ components are equal to $1/\sqrt{n}$ and the last $p/2$ components
are equal to zero.
Finally, we generate 500 $n$-vectors 
$y_i = X \beta + u_i$, where the $u_i$ are independent draws from the 
$N(0, I_n)$-distribution,
compute the Lasso estimators $\hat{\beta}(y_i)$
and the resulting 
selected models $m_i = \hat{m}(y_i)$.
We then check if $|m_i|>1$ and 
if the interval $[L_{m_i}(y_i), U_{m_i}(y_i)]$
satisfies the sufficient condition outlined after 
Proposition~\ref{prop:equiv} with 
$\eta^{m_i} = X_{m_i}(X_{m_i}'X_{m_i})^{-1} e_1$, 
where $e_1$ is the first canonical basis vector in ${\mathbb R}^{|m_i|}$.
This corresponds to the quantity of interest being $\beta^{m_i}_1$, i.e.,
the first component of the parameter corresponding to the selected model.
If said condition is satisfied, the confidence set 
$[L_{\hat{m}}(Y),U_{\hat{m}}(Y)]$ 
is guaranteed to have infinite expected length
conditional on the event that $\hat{m}=m_i$ (and hence also unconditional).
The fraction of indices $i$, $1\leq i \leq 500$, for which
this is the case, are displayed in the cells of Figure~\ref{fig:heatmap}.
If this fraction is below 100\%, we report, in the corresponding cell,
$\min|m_i|/p$ and  $\max|m_i|/p$, where the minimum and the maximum are
taken over those cases $i$ for which the sufficient condition is not met.

We stress here that the choice of $\beta$ does {\em not} have an impact
on whether or not a model $m$ is such that the mean of 
$U_{\hat{m}}(Y) - L_{\hat{m}}(Y)$ is finite
conditional on $\hat{m}=m$. Indeed, the characterization in
Proposition~\ref{prop:equiv} as well as the sufficient condition that
we check do not depend on $\beta$. The choice of $\beta$ does have
an impact, however, on the probability that a given model $m$ is selected
in our simulations.

\subsection{Quantiles of $U_{\hat{m}} - L_{\hat{m}}$}
\label{sim:quantiles}

We approximate the quantiles of $U_{\hat{m}} - L_{\hat{m}}$ through
simulation as follows:  For $n=100$, $p=14$ and $\lambda=10$,
we choose $\beta\in \mathbb R^p$ proportional to $(1,0,1,0,\dots,1,0)'$
so that $\|\beta\| \in \{0, \sqrt{p/2}/10, \sqrt{p/2}\}$.
For each choice of $\beta$, we generate an $n$-vector $y$
as described in the preceding section, compute $m = \hat{m}(y)$
and the interval
$[L_{m}(y), U_{m}(y)]$ for $\beta^{m}_1$, 
and record its length.
This is repeated 10,000 times. The resulting empirical quantiles
are shown in Figure~\ref{fig:quantsim}.

\begin{figure}[h!] 
	\begin{center}
	\includegraphics[width=0.8\textwidth]{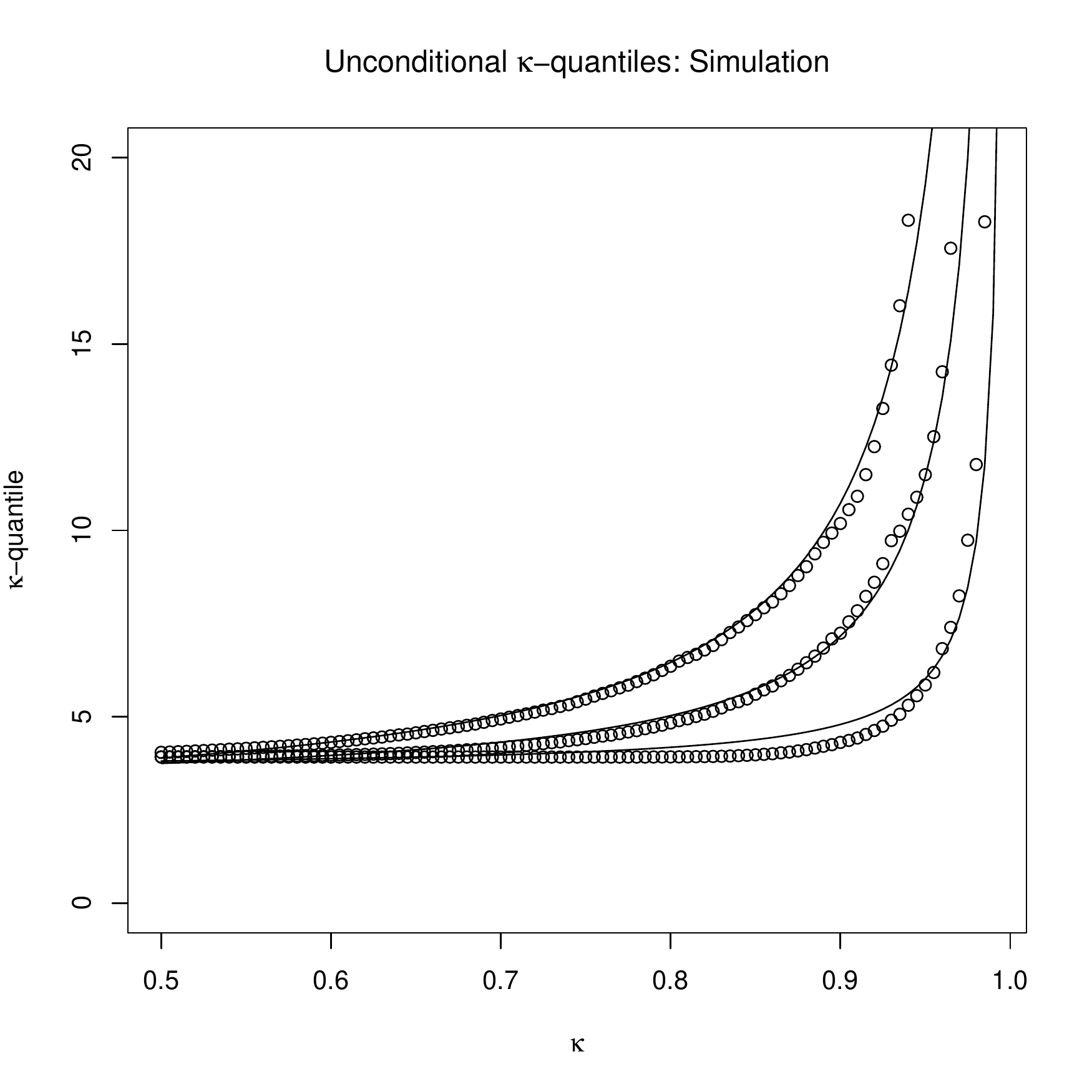} 
	\end{center}
	\caption{Simulated $\kappa$-quantiles.
	The black curves are functions of the form $(a+b\kappa)/(1-\kappa)$,
	with $a$ and $b$ fitted by least squares.
	Starting from the bottom, the curves and the corresponding 
	empirical quantiles correspond to $\|\beta\|$ equal to
	$0$, $\sqrt{p/2}/10$ and $\sqrt{p/2}$.
\label{fig:quantsim}}
\end{figure} 

Figure~\ref{fig:quantsim} suggests that the unconditional $\kappa$-quantiles
also grow like $1/(1-\kappa)$ for $\kappa$ approaching $1$. This growth-rate
was already observed in Proposition~\ref{prop:quant} for 
conditional quantiles. Also, the unconditional $\kappa$-quantiles
increase as $\|\beta\|$ increases, which is again consistent with
that proposition. Repeating this simulation for a range of
other choices for
$p$, $\beta$ and $\lambda$ gave qualitatively similar results,
which are not shown here for the sake of brevity.
For these other choices, the corresponding $\kappa$-quantiles
decrease as the probability of selecting either a very small model
or an almost full model increases, and vice versa.
This is consistent with our findings from 
Corollary~\ref{corollary} and Figure~\ref{fig:heatmap}.

\section{Discussion}
\label{discussion}

The polyhedral method can be used whenever the conditioning event
of interest can be represented as a polyhedron. 
And our results can be applied whenever the
polyhedral method is used for constructing confidence intervals.
Besides the Lasso, this also includes other model selection methods as 
well as some recent proposals related to the
polyhedral method that are mentioned in Remark~\ref{r3}.

By construction, the polyhedral method gives intervals like
$[L_{\hat{m},\hat{s}}, U_{\hat{m},\hat{s}}]$ and
$[L_{\hat{m}}, U_{\hat{m}}]$ that are derived from a confidence set based
on a truncated univariate distribution (in our case, a truncated
normal).
Through this, the former intervals
are rather easy to compute. And through this, the former intervals are
valid conditional on quite small  events, namely 
$\{\hat{m}=m, \hat{s}=s, (I_n-P_{\eta^m})Y = z\}$ and 
$\{\hat{m}=m,  (I_n-P_{\eta^m})Y = z\}$, respectively, which is a strong
property; cf. \eqref{toomuch1}.
But through this, the former intervals
also inherit the property that their length can be quite large.
This undesirable property is inherited through the conditioning on
$(I_n-P_{\eta^m})Y$.
Example~3 in \citet{fithian2017} demonstrates that
requiring validity only on larger
events, like  $\{\hat{m}=m, \hat{s}=s\}$ or $\{\hat{m}=m\}$, 
can result in much shorter intervals.
But when conditioning on these larger events, the underlying reference
distribution is no longer a univariate truncated distribution
but an $n$-variate truncated distribution. Computations involving the
corresponding $n$-variate c.d.f. are much harder than those in the univariate
case.

A recently proposed construction, selective inference with a 
randomized response, provides higher power of hypothesis tests
conditional on the outcome of the model selection step, and hence also
improved confidence sets based on these tests; 
cf. \cite{tian2015} and, in particular, Figure 2 in that reference.
This increase in power is obtained by decreasing the `power' of the
model selection step itself, in the sense that the model selector
$\hat{m}(y)$ is replaced by $\hat{m}(y+\omega)$, where $\omega$
represents additional randomization that is added to the data.
Again, finite-sample computations are demanding in that setting
compared to the simple polyhedral method
(see Section 4.2.2 in the last reference).

Another alternative construction, uniformly most accurate unbiased
(UMAU) confidence intervals should be mentioned here. When the
data-generating distribution belongs to an exponential family,
UMAU intervals
can be constructed conditional on events of interest like 
$\{\hat{m}=m\}$ or on smaller events like 
$\{\hat{m}=m, (I_n-P_{\eta^m})Y=z\}$; cf. \citet{fithian2017}.
In either case, UMAU intervals
require more involved computations than the equal-tailed intervals considered
here.

\section*{Acknowledgments}

We thank the Associate Editor and two referees, whose feedback
has led to significant improvements of the paper. Also, 
helpful input from Nicolai Amann is greatly appreciated.


\begin{appendices}

\section{Auxiliary results}

In this section, we collect some properties of functions
like $F^T_{\theta,\varsigma^2}(w)$ that 
will be needed in the proofs of Proposition~\ref{prop:ci_length}
and Proposition~\ref{prop:sets}.
The following
result will be used repeatedly in the following and
is easily verified using  L'Hospital's method.

\begin{lemma} \label{le:1}
For all $a,b$ with $-\infty \leq a < b \leq \infty$, the following 
holds:
\begin{align}\nonumber
	\lim\limits_{\theta \to \infty} 
	\frac{\Phi\left(a-\theta\right)}{\Phi\left(b-\theta\right)} = 0.
\end{align}
\end{lemma}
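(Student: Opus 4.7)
The plan is to treat the two degenerate cases separately and then dispatch the main case by a single application of L'Hospital's rule, as the statement of the lemma itself suggests.

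First I would handle the boundary cases. If $a=-\infty$, then $\Phi(a-\theta) = 0$ identically and the quotient is trivially $0$ (the denominator being positive for all finite $\theta$, since either $b$ is finite or $b=\infty$). If $b=\infty$, then $\Phi(b-\theta) = 1$ for every $\theta$, so the limit reduces to $\lim_{\theta\to\infty} \Phi(a-\theta) = 0$ (using $a<\infty$). These cases are purely definitional and require no computation.

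For the main case, in which both $a$ and $b$ are finite with $a<b$, observe that as $\theta\to\infty$ both $a-\theta$ and $b-\theta$ tend to $-\infty$, so numerator and denominator each tend to $0$. This is the $0/0$ form to which L'Hospital's rule applies. Differentiating top and bottom in $\theta$ yields $-\phi(a-\theta)$ and $-\phi(b-\theta)$ respectively, reducing the problem to showing $\phi(a-\theta)/\phi(b-\theta)\to 0$. Using the explicit form of the standard normal density, this ratio equals
\[
\exp\!\left(\frac{(b-\theta)^2 - (a-\theta)^2}{2}\right) \;=\; \exp\!\left(\frac{(b-a)(b+a-2\theta)}{2}\right).
\]
Since $b-a > 0$ and $b+a-2\theta \to -\infty$, the exponent diverges to $-\infty$, so the ratio tends to $0$, which completes the argument.

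There is essentially no obstacle here; the only care required is in verifying that L'Hospital's hypotheses apply (both functions smooth in $\theta$, denominator strictly positive for all finite $\theta$, and both tending to $0$). Once the derivative quotient is rewritten in closed form using the explicit expression of $\phi$, the conclusion follows from a single sign check on $b-a$.
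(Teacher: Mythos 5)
Your proof is correct and follows exactly the route the paper indicates: the paper states that Lemma~\ref{le:1} ``is easily verified using L'Hospital's method'' and gives no further detail, and your argument supplies precisely that verification, with the degenerate cases $a=-\infty$ and $b=\infty$ handled correctly via the conventions $\Phi(-\infty)=0$ and $\Phi(\infty)=1$. Nothing further is needed.
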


Write $F^T_{\theta,\varsigma^2}(w)$ 
and $f^T_{\theta,\varsigma^2}(w)$ for the c.d.f. and p.d.f. of 
the $TN(\theta,\varsigma^2,T)$-distribution, where
$T=\cup_{i=1}^K (a_i,b_i)$  with 
$-\infty \leq a_1 < b_1 < a_2 < \dots < a_K < b_K \leq \infty$.
For $w \in T$ and for $k$ so that $a_k < w < b_k$,
we have
$$
	F^T_{\theta,\varsigma^2}(w) = 
\frac{\Phi\left(\frac{w-\theta}{\varsigma}\right) - 
\Phi\left(\frac{a_k-\theta}{\varsigma}\right) + \sum\limits_{i=1}^{k-1} 
\Phi\left(\frac{b_i-\theta}{\varsigma}\right) - 
\Phi\left(\frac{a_i-\theta}{\varsigma}\right)}{\sum\limits_{i=1}^K 
\Phi\left(\frac{b_i-\theta}{\varsigma}\right) - 
\Phi\left(\frac{a_i-\theta}{\varsigma}\right)};
$$
if $k=1$, the sum in the numerator is to be interpreted as 0. 
And for $w$ as above, the density $f^T_{\theta,\varsigma^2}(w)$ is equal
to $\phi((w-\theta)/\varsigma) / \varsigma$ divided by the denominator
in the preceding display.

\begin{lemma} \label{le:3}
For each fixed $w \in T$, 
	$F^T_{\theta,\varsigma^2}(w)$ is continuous and
	strictly decreasing in $\theta$, and
$$
	\lim_{\theta \to -\infty} F^T_{\theta,\varsigma^2}(w) = 1 
	\quad\text{ and } \quad
	\lim_{\theta \to \infty} F^T_{\theta,\varsigma^2}(w) = 0.
$$
\end{lemma}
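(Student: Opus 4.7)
My plan is to work from the explicit formula for $F^T_{\theta,\varsigma^2}(w)$ displayed just above the lemma. Continuity in $\theta$ is immediate: each summand $\Phi((c-\theta)/\varsigma)$ is smooth in $\theta$, and the denominator equals $P_\theta(W\in T)$ for $W\sim N(\theta,\varsigma^2)$, which is strictly positive since $w\in T$ forces $T\supseteq(a_k,b_k)$ for the unique $k$ with $a_k<w<b_k$.

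For strict monotonicity, I would split $T$ into $T_1=T\cap(-\infty,w]$ and $T_2=T\cap(w,\infty)$ and write
$$F^T_{\theta,\varsigma^2}(w)\;=\;\frac{1}{1+R(\theta)},\qquad R(\theta)\;=\;\frac{P_\theta(W\in T_2)}{P_\theta(W\in T_1)}.$$
Differentiating under the integral sign, using $\partial_\theta\phi((x-\theta)/\varsigma)=\varsigma^{-2}(x-\theta)\phi((x-\theta)/\varsigma)$, yields
$$\frac{d}{d\theta}\log R(\theta)\;=\;\varsigma^{-2}\Bigl(E_\theta[W\mid W\in T_2]\;-\;E_\theta[W\mid W\in T_1]\Bigr),$$
which is strictly positive because $T_2\subseteq(w,\infty)$ forces $E_\theta[W\mid W\in T_2]>w$ while $T_1\subseteq(-\infty,w]$ forces $E_\theta[W\mid W\in T_1]\le w$. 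Both conditional expectations are well-defined since $T_1\supseteq(a_k,w]$ and $T_2\supseteq(w,b_k)$ have positive Lebesgue measure. Hence $R$ is strictly increasing and $F^T_{\theta,\varsigma^2}(w)$ is strictly decreasing in $\theta$.

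For the two limits, I would combine the reflection identity $\Phi((b-\theta)/\varsigma)-\Phi((a-\theta)/\varsigma)=\Phi((\theta-a)/\varsigma)-\Phi((\theta-b)/\varsigma)$ with Lemma~\ref{le:1}. As $\theta\to\infty$, I would divide numerator and denominator of the defining formula by $\Phi((b_K-\theta)/\varsigma)$: by the ordering $a_1<b_1<\cdots<a_K<b_K$ together with $a_k<w<b_k$, every endpoint $c$ appearing in the numerator satisfies $c<b_K$, so Lemma~\ref{le:1} sends the numerator to $0$, while only the $i=K$ summand survives in the denominator and contributes $1$. Symmetrically, as $\theta\to-\infty$, I would divide by $\Phi((\theta-a_1)/\varsigma)$; Lemma~\ref{le:1} applied with $-\theta/\varsigma$ in place of its argument $\theta$ gives $\Phi((\theta-c)/\varsigma)/\Phi((\theta-a_1)/\varsigma)\to 0$ whenever $c>a_1$, so both numerator and denominator converge to $1$ (via the $i=1$ summand on each side).

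The main obstacle is purely bookkeeping: the numerator of $F^T_{\theta,\varsigma^2}(w)$ depends on the index $k$ with $w\in(a_k,b_k)$, and one must verify that every endpoint appearing there is strictly smaller than $b_K$ (for $\theta\to\infty$) and strictly larger than $a_1$ (for $\theta\to-\infty$). These inequalities follow from the strict orderings above, with the edge cases $k=1$ (empty sum in the numerator) and $k=K$ requiring a brief separate check.
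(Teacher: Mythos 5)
Your proof is correct, and for the monotonicity claim it is genuinely more self-contained than the paper's. The paper handles strict monotonicity by citing \cite{lee2016} for the single-interval case $K=1$ and asserting that the argument adapts to $K>1$; you instead write $F^T_{\theta,\varsigma^2}(w)=1/(1+R(\theta))$ with $R(\theta)=P_\theta(W\in T_2)/P_\theta(W\in T_1)$ and show $\frac{d}{d\theta}\log R(\theta)=\varsigma^{-2}\bigl(E_\theta[W\mid W\in T_2]-E_\theta[W\mid W\in T_1]\bigr)>0$, a monotone-likelihood-ratio-type computation that works uniformly in $K$ and makes the adaptation explicit rather than asserted. (The differentiation under the integral sign is routine for Gaussian densities, and your observation that $(a_k,w]\subseteq T_1$ and $(w,b_k)\subseteq T_2$ both have positive measure is exactly what is needed for $R$ to be well defined and for the strict inequality between the conditional means.) For the limits you follow essentially the paper's route: the $\theta\to\infty$ case is identical (divide by $\Phi((b_K-\theta)/\varsigma)$ and apply Lemma~\ref{le:1}), and for $\theta\to-\infty$ your termwise reflection $\Phi(t)=1-\Phi(-t)$ is the same device as the paper's identity $F^T_{\theta,\varsigma^2}(w)=1-F^{-T}_{-\theta,\varsigma^2}(-w)$, just applied inside the formula instead of to the c.d.f.\ as a whole; your bookkeeping of which endpoint dominates ($w<b_K$ always, and the single surviving $\Phi((\theta-a_1)/\varsigma)$ term in numerator and denominator, whether it comes from the leading bracket when $k=1$ or from the $i=1$ summand when $k\ge 2$) is accurate.
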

\begin{proof}
	Continuity is
	obvious and monotonicity has been shown 
	in \cite{lee2016} for the case where $T$ is a single interval,
	i.e., $K=1$; it is easy to adapt that argument
	to also cover the case $K>1$.
	Next consider the formula for $F^T_{\theta,\varsigma^2}(w)$.
	As $\theta\to\infty$, Lemma~\ref{le:1} implies that the leading
	term in the numerator is $\Phi((w-\theta)/\varsigma)$ while
	the leading term in the denominator is $\Phi((b_K-\theta)/\varsigma)$.
	Using Lemma~\ref{le:1} again gives 
	$\lim_{\theta\to\infty} F^T_{\theta,\varsigma^2}(w) = 0$.
	Finally, it is easy to see that
	$F^T_{\theta,\varsigma^2}(w)  = 1-F^{-T}_{-\theta,\varsigma^2}(-w)$
	(upon using the relation $\Phi(t) = 1-\Phi(-t)$ and a little algebra).
	With this, we also obtain that
	$\lim_{\theta\to-\infty} F^T_{\theta,\varsigma^2}(w) = 1$.
\end{proof}

For $\gamma\in (0,1)$ and $w \in T$, define $Q_\gamma(w)$ through
$$
F^T_{Q_\gamma(w), \varsigma^2}(w) \quad=\quad \gamma.
$$
Lemma~\ref{le:3} ensures that $Q_\gamma(w)$ is well-defined.
Note that $L(w) = Q_{1-\alpha/2}(w)$
and $U(w) = Q_{\alpha/2}(w)$.

\begin{lemma} \label{le:4}
For fixed $w\in T$, $Q_{\gamma}(w)$ is 
strictly decreasing in $\gamma$ on $(0,1)$.
And for fixed $\gamma\in (0,1)$, $Q_\gamma(w)$ is
continuous and strictly increasing in $w \in T$
so that $\lim_{w \searrow a_1} Q_\gamma(w) = -\infty$
and $\lim_{w \nearrow b_K} Q_\gamma(w) = \infty$.
\end{lemma}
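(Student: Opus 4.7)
The proof splits naturally into the two assertions, and the main technical content is just inverting the bijection established by Lemma~\ref{le:3}.

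For the first assertion, I would fix $w \in T$ and simply observe that by Lemma~\ref{le:3} the map $\theta \mapsto F^T_{\theta,\varsigma^2}(w)$ is a continuous strictly decreasing bijection from $\mathbb R$ onto $(0,1)$. The equation defining $Q_\gamma(w)$ is exactly the inversion of this map, so $\gamma \mapsto Q_\gamma(w)$ is the inverse function, which is therefore continuous and strictly decreasing on $(0,1)$.

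For the second assertion, I would fix $\gamma \in (0,1)$. The strict monotonicity in $w$ comes from combining two opposite monotonicities of $F^T_{\theta,\varsigma^2}(w)$: it is strictly decreasing in $\theta$ (Lemma~\ref{le:3}), and for any fixed $\theta$ it is strictly increasing on $T$ as a function of $w$, since the truncated density $f^T_{\theta,\varsigma^2}$ is strictly positive on each component interval $(a_i,b_i)$ (this handles both the case where $w_1,w_2$ lie in the same component and the case where they lie in different components, because the mass accumulated on the gap is zero but the mass on a small sub-interval of $T$ just to the right of $w_1$ is positive). Thus for $w_1 < w_2$ in $T$,
\[
\gamma \;=\; F^T_{Q_\gamma(w_1),\varsigma^2}(w_1) \;<\; F^T_{Q_\gamma(w_1),\varsigma^2}(w_2),
\]
and since $F$ is strictly decreasing in its subscript parameter, matching the right-hand side back to $\gamma$ forces $Q_\gamma(w_2) > Q_\gamma(w_1)$. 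Continuity of $Q_\gamma$ on $T$ would then follow from joint continuity of $F^T_{\theta,\varsigma^2}(w)$ in $(\theta,w)$ together with the strict monotonicity in $\theta$: a standard squeeze argument shows that if $w_n \to w_0$ in $T$ and $Q_\gamma(w_n)$ had a subsequential limit $\theta^\ast \ne Q_\gamma(w_0)$, then sending $n\to\infty$ in $F^T_{Q_\gamma(w_n),\varsigma^2}(w_n) = \gamma$ would contradict the strict decreasingness.

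The limit statements are the only place where a little care is needed, and I would argue them by contradiction. Consider $w \searrow a_1$; by monotonicity $Q_\gamma(w) \to \ell \in [-\infty,\infty)$. If $\ell > -\infty$, I would pick $\theta' < \ell$, so that for $w$ close enough to $a_1$ the decreasingness in $\theta$ gives
\[
\gamma \;=\; F^T_{Q_\gamma(w),\varsigma^2}(w) \;<\; F^T_{\theta',\varsigma^2}(w).
\]
But $F^T_{\theta',\varsigma^2}(w) = \mathbb P(N(\theta',\varsigma^2) \in (a_1,w])/\mathbb P(N(\theta',\varsigma^2)\in T)$ tends to $0$ as $w\searrow a_1$ (whether $a_1$ is finite or $-\infty$, since the numerator is the probability of a shrinking or escaping-to-$-\infty$ set), contradicting $\gamma > 0$; hence $\ell = -\infty$. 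The case $w \nearrow b_K$ is symmetric, using Lemma~\ref{le:3}'s treatment of the two-sided tail. The main obstacle is the bookkeeping in the monotonicity step when $T$ has several components, but strict positivity of the truncated density on each component interval resolves it cleanly.
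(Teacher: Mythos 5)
Your proposal is correct and follows essentially the same route as the paper's proof: the first assertion by inverting the strictly decreasing map $\theta \mapsto F^T_{\theta,\varsigma^2}(w)$ from Lemma~\ref{le:3}, strict monotonicity in $w$ from positivity of the truncated density combined with monotonicity in $\theta$, continuity from joint continuity of $F^T_{\theta,\varsigma^2}(w)$, and the boundary limits by contradiction. The only cosmetic difference is in the limit step, where you fix a parameter value $\theta'$ below the putative finite limit and let $F^T_{\theta',\varsigma^2}(w)\to 0$, whereas the paper fixes an interior point $b<b_K$ and drives the c.d.f.\ to $1$; these are mirror images of the same argument.
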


\begin{proof}
Fix $w\in T$.  Strict monotonicity of $Q_\gamma(w)$ in $\gamma$ follows 
from strict monotonicity of $F^T_{\theta,\varsigma^2}(w)$ in $\theta$;
cf. Lemma~\ref{le:3}.

Fix $\gamma\in (0,1)$ throughout the following.
To show that $Q_\gamma(\cdot)$ is strictly increasing on $T$,
fix $w, w' \in T$ with $w < w'$. We get
$$
\gamma \quad=\quad F^T_{Q_\gamma(w),\varsigma^2}(w) \quad < \quad 
	F^T_{Q_\gamma(w),\varsigma^2}(w'),
$$
where the inequality holds because the density of
$F^T_{Q_\gamma(w),\varsigma^2}(\cdot)$ is positive on $T$.
The definition of $Q_\gamma(w')$ and Lemma~\ref{le:3} entail
that $Q_\gamma(w) < Q_\gamma(w')$.

To show that $Q_\gamma(\cdot)$ is continuous on $T$,
we first note that $F^T_{\theta,\varsigma^2}(w)$
is continuous in $(\theta,w) \in \mathbb R \times T$
(which is easy to see from the formula for $F^T_{\theta,\varsigma^2}(w)$
given after Lemma~\ref{le:1}).
Now
fix $w\in T$. Because $Q_\gamma(\cdot)$ is monotone, it suffices 
to show that $Q_\gamma(w_n) \to Q_\gamma(w)$ for any increasing 
sequence $w_n$ in $T$ converging to $w$ from below, and for any 
decreasing sequence $w_n$ converging to $w$ from above.
If the $w_n$ increase towards $w$ from below, 
the sequence $Q_\gamma(w_n)$ is increasing and bounded
by $Q_\gamma(w)$ from above, so that
$Q_\gamma(w_n)$ converges to a finite limit $\overline{Q}$.
With this, and
because $F^T_{\theta,\varsigma^2}(w)$ is continuous in $(\theta,w)$,
it follows that
$$
\lim_n F^T_{Q_\gamma(w_n),\varsigma^2}(w_n) \quad =\quad 
F^T_{\overline{Q},\varsigma^2}(w).
$$
In the preceding display, the sequence on the left-hand side is
constant equal to $\gamma$ by definition of $Q_\gamma(w_n)$,
so that $F^T_{\overline{Q},\varsigma^2}(w) = \gamma$.
It follows that $\overline{Q} = Q_\gamma(w)$.
If the $w_n$ decrease towards $w$ from above, a similar
argument applies.

To show that $\lim_{w \nearrow b_K} Q_\gamma(w) = \infty$,
let $w_n$, $n\geq 1$, be an increasing sequence in $T$
that converges to $b_K$. It follows that $Q_\gamma(w_n)$
converges to a (not necessarily finite) limit $\overline{Q}$
as $n\to\infty$. If $\overline{Q} < \infty$, we get for each 
$b < b_K$ that
$$
\liminf_n F^T_{Q_\gamma(w_n),\varsigma^2}(w_n)
\geq
\liminf_n F^T_{Q_\gamma(w_n),\varsigma^2}(b)
=
F^T_{\overline{Q},\varsigma^2}( b).
$$
In this display, the inequality holds because
$F^T_{Q_\gamma(w_n),\varsigma^2}(\cdot)$ is a c.d.f.,
and the equality holds because $F^T_{\theta,\varsigma^2}(b)$
is continuous in $\theta$.
As this holds for each $b < b_K$, we obtain that
$\liminf_n F^T_{Q_\gamma(w_n),\varsigma^2}(w_n) = 1$. But in this equality,
the left-hand side equals $\gamma$ -- a contradiction. 
By similar arguments, it also follows that 
$\lim_{w \searrow a_1} Q_\gamma(w) = -\infty$.
\end{proof}

\begin{lemma} \label{le:5}
The function $Q_\gamma(\cdot)$ satisfies
\begin{align*}
\lim\limits_{w \nearrow b_K}(b_K - w)Q_\gamma(w) &= -\varsigma^2\log(\gamma) 
& &
\text{if $b_K < \infty$ and}
\\
\lim\limits_{w \searrow a_1}(a_1-w)Q_\gamma(w) & = -\varsigma^2\log(1-\gamma) 
& &
\text{if $a_1 > -\infty$}.
\end{align*}
\end{lemma}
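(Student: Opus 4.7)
The plan is to establish the first (right-endpoint) limit in detail and then obtain the second from the symmetry $F^T_{\theta,\varsigma^2}(w) = 1 - F^{-T}_{-\theta,\varsigma^2}(-w)$ already noted in the proof of Lemma~\ref{le:3}. Replacing $T$ by $-T$, $w$ by $-w$, and $\gamma$ by $1-\gamma$ converts a statement about the right endpoint $b_K$ of $T$ into one about the left endpoint $a_1$ of $T$, since the largest right endpoint of $-T$ equals $-a_1$, and the identity forces $Q_\gamma(w) = -Q^{-T}_{1-\gamma}(-w)$.

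For the right-endpoint statement, write $\theta_w = Q_\gamma(w)$ and note that, by Lemma~\ref{le:4}, $\theta_w \to \infty$ as $w \nearrow b_K$. For $w$ close enough to $b_K$, the index $k$ in the explicit formula for $F^T_{\theta,\varsigma^2}(w)$ given above Lemma~\ref{le:3} equals $K$. All of $a_1, b_1, \ldots, b_{K-1}, a_K$ are strictly less than $w$, and all of $a_1, b_1, \ldots, b_{K-1}, a_K$ are strictly less than $b_K$; applying Lemma~\ref{le:1} term by term therefore shows that, as $\theta \to \infty$, the numerator of $F^T_{\theta,\varsigma^2}(w)$ is $\Phi((w-\theta)/\varsigma)(1+o(1))$ and the denominator is $\Phi((b_K-\theta)/\varsigma)(1+o(1))$. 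Since $\theta_w \to \infty$, substituting $\theta = \theta_w$ into $F^T_{\theta_w,\varsigma^2}(w) = \gamma$ gives
\begin{equation*}
\gamma \;=\; \frac{\Phi((w-\theta_w)/\varsigma)}{\Phi((b_K-\theta_w)/\varsigma)}\bigl(1+o(1)\bigr)\qquad\text{as } w \nearrow b_K.
\end{equation*}

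Next I would invoke Mills' ratio, $\Phi(-x) = (\phi(x)/x)(1 + O(x^{-2}))$ as $x \to \infty$. Writing $x_w = (\theta_w - b_K)/\varsigma \to \infty$ and $\delta_w = (b_K - w)/\varsigma \searrow 0$, the display above becomes
\begin{equation*}
\gamma \;=\; \frac{x_w}{x_w + \delta_w}\exp\!\bigl(-x_w \delta_w - \delta_w^2/2\bigr)\bigl(1+o(1)\bigr).
\end{equation*}
Because $\delta_w \to 0$ and $x_w \to \infty$, the prefactor $x_w/(x_w+\delta_w)$ tends to $1$ and $\delta_w^2/2$ tends to $0$; taking logarithms then forces $x_w \delta_w \to -\log \gamma$. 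The conclusion is now just bookkeeping: $(b_K - w)\theta_w = \varsigma b_K \delta_w + \varsigma^2 x_w \delta_w$, whose first summand vanishes and whose second summand tends to $-\varsigma^2\log\gamma$.

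The main obstacle is making the Mills-ratio step genuinely rigorous when the offset $\delta_w$ varies with $w$ rather than being fixed. I would resolve this by using the classical two-sided bound $(1 - x^{-2})\phi(x)/x \leq \Phi(-x) \leq \phi(x)/x$ for $x>0$ to sandwich the ratio $\Phi(-(x_w+\delta_w))/\Phi(-x_w)$ by explicit upper and lower expressions whose limits, as $x_w \to \infty$ and $\delta_w \to 0$, coincide. Once this identifies the displayed equation with error term $(1+o(1))$, the remaining algebra---extracting $x_w\delta_w \to -\log\gamma$ and converting to $(b_K-w)\theta_w \to -\varsigma^2\log\gamma$---is routine.
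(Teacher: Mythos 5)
Your proof is correct and follows essentially the same route as the paper's: extract $\lim_{w\nearrow b_K}\Phi((w-Q_\gamma(w))/\varsigma)/\Phi((b_K-Q_\gamma(w))/\varsigma)=\gamma$ from Lemma~\ref{le:1} and the definition of $Q_\gamma$, pass to the ratio of densities via Mills' ratio, and take logarithms. Your only additions are cosmetic: you spell out the sandwich bound justifying the Mills-ratio step with a varying offset (the paper invokes $\Phi(-x)\sim\phi(x)/x$ directly), and you make explicit the reflection $T\mapsto -T$, $\gamma\mapsto 1-\gamma$ that the paper dismisses as a ``similar argument'' for the left-endpoint case.
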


\begin{proof}
As both statements follow from similar arguments, we only give the
details for the first one.
As $w$ approaches $b_k$ from below, $Q_\gamma(w)$ converges to 
$\infty$ by Lemma~\ref{le:4}.
This observation, the fact that $F^T_{Q_\gamma(w),\varsigma^2}(w) = \gamma$ holds
for each $w$, and Lemma~\ref{le:1} together imply that
$$
\lim_{w\nearrow b_k} \frac{ \Phi\left( \frac{ w-Q_\gamma(w)}{\varsigma}\right) }{
	\Phi\left( \frac{ b_k-Q_\gamma(w)}{\varsigma}\right)} 
	\quad =\quad \gamma.
$$
Because $\Phi(-x) / (\phi(x)/x) \to 1$ as $x\to\infty$
(cf.  \citealt[Lemma VII.1.2.]{Fel57a}), we get that
$$
\lim_{w\nearrow b_k} \frac{ \phi\left( \frac{ w-Q_\gamma(w)}{\varsigma}\right) }{
	\phi\left( \frac{ b_k-Q_\gamma(w)}{\varsigma}\right)} 
	\quad =\quad \gamma.
$$
The claim now follows by plugging-in the formula for $\phi(\cdot)$ on
the left-hand side, simplifying, and then taking the logarithm of both sides.
\end{proof}

\section{Proof of Proposition~\ref{prop:ci_length}} 
\label{sec:proof_prop_ci_length}

\begin{proof}[Proof of the first statement in Proposition~\ref{prop:ci_length}]
Assume that $b_K < \infty$ (the case where $a_1>-\infty$ is treated similarly).
Lemma~\ref{le:5} entails that 
$\lim_{w\nearrow b_K} (b_K-w)(U(w)-L(w)) = \varsigma^2 C$,
where $C = \log( (1-\alpha/2)/(\alpha/2))$ is positive.
Hence, there exists a constant $\epsilon>0$ so that
$$
U(w) - L(w) \quad > \quad \frac{1}{2} \frac{ \varsigma^2 C}{b_K - w}
$$
whenever $w \in (b_K-\epsilon, b_K) \cap T$.
Set $B = \inf\{ f^T_{\theta,\varsigma^2}(w): w \in (b_K-\epsilon,b_K)\cap T\}$.
For $w\in T$,
$f^T_{\theta,\varsigma^2}(w)$ is a Gaussian density divided by
a constant scaling factor, so that $B >0$.
Because $U(w) - L(w) \geq 0$ in view of Lemma~\ref{le:4}, we obtain that
\begin{align*}
\mathbb E_{\theta,\varsigma^2}[ U(W) - L(W) | W \in T]
\quad \geq \quad
\frac{\varsigma^2 B C}{2} \int_{(b_K-\epsilon,b_K)\cap T} \frac{1}{b_K -w} d w
\quad=\quad \infty.
\end{align*}
\end{proof}

\begin{proof}[Proof of the first inequality in Proposition~\ref{prop:ci_length}]
Define $R_\gamma(w)$ through $\Phi((w-R_\gamma(w))/\varsigma) = \gamma$, i.e, 
$R_\gamma(w) = w - \varsigma \Phi^{-1}(\gamma)$
Then, on the one hand, we have
\begin{align*}
F^T_{R_\gamma(w),\varsigma^2}(w) 
& \quad =\quad \frac{ P( N(R_\gamma(w),\varsigma^2) \leq w, N(R_\gamma(w),\varsigma^2) \in T)}{
	P( N(R_\gamma(w),\varsigma^2) \in T)}
\\
& \quad \leq \quad \frac{ P( N(R_\gamma(w),\varsigma^2) \leq w)}{
	\inf_{\vartheta} P( N(\vartheta,\varsigma^2) \in T)}
	\quad=\quad \frac{\gamma}{p_\ast},
\end{align*}
while, on the other,
\begin{align*}
F^T_{R_\gamma(w),\varsigma^2}(w) 
& \quad \geq \quad \frac{ P( N(R_\gamma(w),\varsigma^2) \leq w)-P( N(R_\gamma(w),\varsigma^2) \not\in T)}{
	P( N(R_\gamma(w),\varsigma^2) \in T)}
\\
&\quad\geq\quad
	\inf_{\vartheta} 
	\frac{ P(N(R_\gamma(w),\varsigma^2)\leq w) - 1 + 
		P(N(\vartheta,\varsigma^2) \in T)}{
		P(N(\vartheta,\varsigma^2)\in T)}
\\
&\quad = \quad \frac{\gamma-1+p_\ast}{p_\ast}.
\end{align*}
The inequalities in the preceding two displays imply that
$$
R_{1-p_\ast(1-\gamma)}(w) \quad\leq\quad
Q_{\gamma}(w) \quad\leq\quad
R_{p_\ast\gamma}(w).
$$
(Indeed, the inequality in the third-to-last display continues to hold with
$p_\ast\gamma$ replacing $\gamma$; in that case, the upper bound reduces to
$\gamma$; similarly, the inequality in the second-to-last
display continues to hold with $1-p_\ast(1-\gamma)$ replacing $\gamma$,
in which case the lower bound reduces to $\gamma$.
Now use the fact that
$F^T_{\theta,\varsigma^2}(w)$ is decreasing in $\theta$.)
In particular, we get that 
$U(w) = Q_{\alpha/2}(w) \leq R_{p_\ast \alpha/2}(w) = 
	w-\varsigma \Phi^{-1}(p_\ast \alpha/2)$
and that 
$L(w) = Q_{1-\alpha/2}(w) \geq R_{1-p_\ast \alpha/2}(w) = 
	w-\varsigma \Phi^{-1}(1- p_\ast \alpha/2)$.
The last two inequalities, and the symmetry of $\Phi(\cdot)$ around zero,
imply the first inequality in the proposition.
\end{proof}

\begin{proof}[Proof of the second inequality in 
	Proposition~\ref{prop:ci_length}]
Note that $p_\ast \geq p_\circ = 
	\inf_{\vartheta} P( N(\vartheta,\varsigma^2) < b_1 \text{ or }
	N(\vartheta,\varsigma^2) > a_K)$, 
because $T$ is unbounded above and below.
Setting $\delta = (a_K-b_1)/(2\varsigma)$, we note that $\delta\geq 0$ and that
it is elementary to verify that $p_\circ = 2 \Phi( -\delta)$.
Because 
$\Phi^{-1}(1-p_\ast \alpha/2) \leq \Phi^{-1}(1-p_\circ \alpha/2)$,
the inequality will follow if we can show that 
$\Phi^{-1}(1-p_\circ \alpha/2) \leq \Phi^{-1}(1-\alpha/2) + \delta$
or, equivalently,  that
$\Phi^{-1}( p_\circ \alpha/2) \geq \Phi^{-1}(\alpha/2) - \delta$.
Because $\Phi(\cdot)$ is strictly increasing, this is equivalent
to
$$
p_\circ \alpha/2 \quad=\quad
\Phi(-\delta)\alpha \quad \geq \quad \Phi( \Phi^{-1}(\alpha/2)-\delta).
$$
To this end, we
set $f(\delta) = \alpha \Phi(-\delta) / \Phi( \Phi^{-1}(\alpha/2)-\delta)$
and show that $f(\delta)\geq 1$ for $\delta\geq 0$.
Because $f(0)=1$, it suffices to show that $f'(\delta)$ is non-negative
for $\delta>0$.
The derivative can be written as a fraction with positive denominator
and with numerator equal to
$$
-\alpha \phi(-\delta) \Phi( \Phi^{-1}(\alpha/2) -\delta)
\;+\;\alpha \Phi(-\delta)  \phi( \Phi^{-1}(\alpha/2)-\delta).
$$
The expression in the preceding display is non-negative if and only if
$$
\frac{ \Phi(-\delta)}{\phi(-\delta)} \quad \geq \quad
\frac{ \Phi(\Phi^{-1}(\alpha/2)-\delta)}{\phi(\Phi^{-1}(\alpha/2)-\delta)}.
$$
This will follow if the function $g(x) = \Phi(-x)/\phi(x)$ is 
decreasing in $x\geq 0$. The derivative $g'(x)$ can be written as a fraction
with positive denominator and with numerator equal to
$$
-\phi(x)^2 + x \Phi(-x) \phi(x) \quad = \quad 
	x \phi(x) \left( \Phi(-x) - \frac{\phi(x)}{x}\right).
$$
Using the well-known inequality
$\Phi(-x) \leq \phi(x)/x$ for $x>0$ \cite[Lemma VII.1.2.]{Fel57a},
we see that the expression in the preceding display 
is non-positive for $x>0$.
\end{proof}


\section{Proof of Proposition~\ref{prop:sets}} \label{sec:proof_prop_sets}

From \cite{lee2016}, we recall the formulas for the expressions on the
right-hand  side of \eqref{polyh}, namely
$A_{m,s} = (A^0_{m,s}\mbox{}', A^1_{m,s}\mbox{}')'$
and $b_{m,s} = (b^0_{m,s}\mbox{}', b^1_{m,s}\mbox{}')'$
with $A^0_{m,s}$ and $b^0_{m,s}$ given by
\begin{align*}
	\frac{1}{\lambda}
	\begin{pmatrix}
	X_{m^c}' (I_n - P_{X_m})\\
	-X_{m^c}' (I_n - P_{X_m})
	\end{pmatrix}\quad \text{ and } \quad
	\begin{pmatrix}
	\iota-X_{m^c}' X_m (X_m'X_m)^{-1} s \\
	\iota+X_{m^c}' X_m (X_m'X_m)^{-1} s
	\end{pmatrix},
\end{align*}
respectively, and with
$A^1_{m,s} = - \text{diag}( s) (X_m'X_m)^{-1} X_m'$
and $b^1_{m,s}= -\lambda \text{diag}(s)(X_m'X_m)^{-1} s$
(in the preceding display, $P_{X_m}$ denotes the orthogonal 
projection matrix onto the column space spanned by $X_m$ 
and $\iota$ denotes an appropriate vector of ones).
Moreover, it is easy to see that the  set 
$\{ y: A_{m,s} y < b_{m,s}\}$ can be written as
$\{y: \text{ for $z=(I_p-P_{\eta^m})y$, we have }
\mathcal V^-_{m,s}(z) < \eta^{m}\mbox{}' y < \mathcal V^+_{m,s}(z),
\mathcal V^0_{m,s}(z) > 0\}$,
where
\begin{align*}
\mathcal V^-_{m,s}(z) &\quad=\quad
	\max\Big(
	\{ 
	(b_{m,s} - A_{m,s}z)_i/(A_{m,s}c^m)_i:\; (A_{m,s}c^m)_i<0
	\} \cup\{-\infty\}\Big),
\\
\mathcal V^+_{m,s}(z) &\quad=\quad
	\min\Big(\{ 
	(b_{m,s} - A_{m,s}z)_i/(A_{m,s}c^m)_i:\; (A_{m,s}c^m)_i>0
	\} \cup\{\infty\}\Big),
\\
\mathcal V^0_{m,s}(z) &\quad=\quad
	\min\Big(\{ 
	(b_{m,s} - A_{m,s}z)_i:\; (A_{m,s}c^m)_i=0
	\} \cup\{\infty\}\Big)
\end{align*}
with $c^m = \eta^m / \|\eta^m\|^2$;
cf. also \cite{lee2016}.

\begin{proof}[Proof of Proposition~\ref{prop:sets}]
Set $I_- = \{i: (A_{m,s} c^m)_i < 0\}$ and $I_+ = \{i: (A_{m,s} c^m)_i > 0\}$.
In view of the formulas of $\mathcal V^-_{m,s}(z)$ and
$\mathcal V^+_{m,s}(z)$ given earlier, it suffices to show
that either $I_-$ or $I_+$ is non-empty. Conversely,
assume that $I_- = I_+ = \emptyset$. Then $A_{m,s}c^m = 0$
and hence also $A^1_{m,s}c^m = 0$. Using the explicit formula
for $A^1_{m,s}$ and the definition of $\eta^m$, i.e.,
$\eta^m = X_m (X_m' X_m)^{-1} \gamma^m$,
it follows that $\gamma^m  = 0$, which contradicts our
assumption that $\gamma^m \in \mathbb R^{|m|}\setminus\{0\}$.
\end{proof}

\section{Proof of Proposition~\ref{prop:equiv} and Corollary~\ref{corollary}}

As a preparatory consideration, recall that
$T_{m}((I_n-P_{\eta^m})y)$ is the union of the intervals
$(\mathcal V^-_{m,s}( (I_n-P_{\eta^m})y), \mathcal V^+_{m,s}( (I_n-P_{\eta^m})y))$
with $s \in \mathcal S_m^+$.
Inspection of the explicit formulas for the interval endpoints
given in Appendix~\ref{sec:proof_prop_sets} now immediately
reveals the following:
The lower endpoint 
$\mathcal V^-_{m,s}( (I_n-P_{\eta^m})y)$ is either 
constant equal to $-\infty$ on the set $\{y: A_{m,s} y < b_{m,s}\}$,
or it is the minimum of a finite number of linear functions of $y$
(and hence finite and continuous) on that set.
Similarly the upper endpoint 
$\mathcal V^+_{m,s}( (I_n-P_{\eta^m})y)$ is either 
constant equal to $\infty$ on that set,
or it is the maximum of a finite number of linear functions of $y$
(and hence finite and continuous) on that set.

\begin{proof}[Proof of Proposition~\ref{prop:equiv}]
Let $m\in \mathcal M^+\setminus\{\emptyset\}$.
We first assume,
for some $s$ and $y$ with $s \in\mathcal S_m^+$ and $A_{m,s} y < b_{m,s}$,
that the set in \eqref{e4} is bounded from above (the case of boundedness
from below is similar).
Then there is an open neighborhood $O$ of $y$, so that
each point $w \in O$ satisfies $A_{m,s} w < b_{m,s}$ and
also so that $T_m((I_n-P_{\eta^m})w)$ is bounded from above.
Because $O$ has positive Lebesgue measure, \eqref{e3} now follows
from Proposition~\ref{prop:ci_length}.
To prove the converse, assume for each $s\in \mathcal S_m^+$ and
each $y$ satisfying $A_{m,s} y < b_{m,s}$ that $T_m((I_n-P_{\eta^m})y)$
is unbounded from above and from below.
Because the sets $\{ y: A_{m,s} y < b_{m,s}\}$
for $s \in \mathcal S_m^+$ are disjoint by construction,
the same is true for the sets $T_{m,s}((I_n - P_{\eta^m})y)$
for $s \in \mathcal S_m^+$.
Using Proposition~\ref{prop:ci_length}, we then obtain that
$U_{\hat{m}}(Y) - L_{\hat{m}}(Y)$ is bounded by a linear function of
\begin{align*}
\max\{ \mathcal V^-_{m,s}((I_n-P_{\eta^m})Y): s\in \mathcal S_m^+ \} 
\quad-\quad
\min\{ \mathcal V^+_{m,s}((I_n-P_{\eta^m})Y): s\in \mathcal S_m^+ \} 
\end{align*}
Lebesgue-almost everywhere on the event $\{\hat{m} = m\}$.
(The maximum and the minimum in the preceding display correspond
to $a_K$ and $b_1$, respectively, in Proposition~\ref{prop:ci_length}.)
It remains to show that the expression in the preceding display has
finite conditional expectation on the event $\{\hat{m}=m\}$.
But this expression is the maximum of a finite number of Gaussians
minus the minimum of a finite number of Gaussians. Its unconditional
expectation, and hence also its conditional expectation on the event 
$\{\hat{m} = m\}$, is finite.
\end{proof}

\begin{proof}[Proof of Corollary~\ref{corollary}]
The statement for $|m|=0$ is trivial. 
Next, consider the case where $|m|=1$.
Take $s\in \mathcal S_m^+$ and $y$ so that $A_{m,s}y < b_{m,s}$.
We need to show that $T_m(z) = T_{m,-1}(z) \cup T_{m,1}(z)$ is 
unbounded above and below for $z = (I_n-P_{\eta^m})y$.
To this end, first recall the
formulas presented at the beginning of Appendix~\ref{sec:proof_prop_sets}.
Together with the fact that, here, $\eta^m = X_m \gamma^m/\|X_m\|^2 \neq 0$,
these formulas  entail that $A^0_{m,1}c^m = A^0_{m,-1}c^m = 0$ 
and that $A^1_{m,1}c^m = - A^1_{m,-1}c^m \neq 0$.
With this, and in view of the definitions of $\mathcal V^-_{m,s}(z)$,
$\mathcal V^+_{m,s}(z)$ and $\mathcal V^0_{m,s}(z)$ in 
Appendix~\ref{sec:proof_prop_sets}, it follows that
$T_m(z)$ is a set of the form 
$(-\infty, -a)\cup (a,\infty)$, which is unbounded. 

Finally, assume that $|m| = p\leq n$. 
Fix $s\in \mathcal S_m^+$
and $y$ so that $A_{m,s}y < b_{m,s}$, and set $z=(I_n-P_{\eta^m})y$. 
Again, we need to show
that $T_m(z) = \cup_{\tilde{s} \in \mathcal S_m^+} T_{m,\tilde{s}}(z)$ 
is unbounded above and below.
For each $\tilde{s} \in \mathcal S_m^+$, it is easy to see that
$A_{m,\tilde{s}}^0 c^m = 0$ and that
$b_{m,\tilde{s}}^0$ is a vector of ones. The condition
$A_{m,\tilde{s}}y < b_{m,\tilde{s}}$ hence reduces to
$A^1_{m,\tilde{s}}y < b^1_{m,\tilde{s}}$. Note that
$A_{m,\tilde{s}}^1 c^m = -\text{diag}(\tilde{s}) (X_m'X_m)^{-1} \gamma^m /
	\gamma^m\mbox{}'(X_m'X_m)^{-1} \gamma^m$, and that
the set of its zero-components does not depend on $\tilde{s}$.
We henceforth assume that $\gamma^m$ is such that all components
of $A^1_{m,s} c^m$ are non-zero, which is satisfied for Lebesgue-almost
all vectors $\gamma^m$.
Now choose sign-vectors $s^+$ and $s^-$ in  $\{-1,1\}^{p}$ as follows:
Set $s^+_i = -1$ if $(A_{m,s}^1 c^m)_i < 0$; 
otherwise, set $s^+_i = s_i$.
With this, we get that $A_{m,s^+}c^m$ is a non-zero vector with
positive components. Choose $s^-$ in a similar fashion,
so that $A_{m,s^-}c^m$ is a non-zero vector with negative components.
It follows that $T_{m,s^+}(z) \cup T_{m,s^-}(z)$ is a set of the form
$(-\infty,-a)\cup (a,\infty)$. We next show that
$s^+$ and $s^-$ lie in $\mathcal S_m^+$. Choose $y^+$ so that
$(I_n-P_{\eta^m})y^+ = z$ and so that $\eta^m\mbox{}' y^+ \in T_{m,s+}(z)$.
Because $\mathcal V^0_{m,s^+}(z) = \infty$ by construction,
it follows that $A_{m,s^+}y^+ < b_{m,s^+}$ and hence $\hat{m}(y^+) = m$
and $\hat{s}(y^+) = s^+$.
Because the same is true for all points in a sufficiently small open
ball around $y^+$, the event $\{\hat{m}=m, \hat{s}=s^+\}$ has positive
probability and hence $s^+ \in \mathcal S_m^+$. A similar argument
entails that $s^- \in \mathcal S_m^+$.
Taken together, we see that 
$T_{m,s^+}(z) \cup T_{m,s^-}(z) \subseteq
\cup_{\tilde{s}\in \mathcal S_m^+} T_{m,\tilde{s}}(z) = T_m(z)$, 
so that the last set is indeed unbounded above and below.
\end{proof}

\begin{Aremark} \label{example}
\normalfont
The statement in Corollary~\ref{corollary} for the case
$|m| = p \leq n$ does not hold for all $\gamma^m$ or, equivalently, 
for all $\eta^m$.   Indeed, if $\gamma^m$ is such that
$\eta^m$ is orthogonal to one of the columns of $X$, then
$T_m((I_n-P_{\eta^m})y)$ can be bounded for some $y$. Figure~\ref{fig:bounded}
illustrates the situation.

\begin{figure}[h!] 
\begin{center}
\begin{tikzpicture}[scale=0.55]
\tikzstyle{every node}=[font=\small]
			
\fill[gray!10!white, opacity=0.9]  (2,0) -- (6,4) -- (10,4) -- (10,0) -- (2,0);
\fill[gray!10!white, opacity=0.9]  (-2,0) -- (-6,-4) -- (-10,-4) -- (-10,0) -- (-2,0);

\draw[gray] (10,9.656854) -- (10,-9.656854) -- (-10,-9.656854) -- (-10,9.656854) -- (10,9.656854);

\draw[-Stealth] (0,0) -- (1,-1);
\draw (1.4,-1.4) node{$x_1$};
\draw[-Stealth] (0,0) -- (0,1);
\draw (0,1.3) node{$x_2$};
\draw[-Stealth] (0,0) -- (1,0);
\draw (1.3,0) node{$\eta^m$};

\draw (2,0) -- (-2,4) -- (-2,0) -- (2,-4) -- (2,0);
				
\draw (2,0) -- (10,8);
\draw (2,0) -- (10,0);
\draw (-2,4) -- (3.656854,9.656854);
\draw (-2,4) -- (-10,4);
\draw (-2,0) -- (-10,-8);
\draw (-2,0) -- (-10,0);
\draw (2,-4) -- (-3.656854,-9.656854);
\draw (2,-4) -- (10,-4);

\draw (-1,1) node{$\emptyset$};
\draw (6,-2) node{$\{1\}, 1$};
\draw (7.681981,3.181981) node{$\{1,2\}, (1,1)$}; 
\draw (2.828427,4.828427) node{$\{2\}, 1$};
\draw (-3.530734,7.695518) node{$\{1,2\}, (-1,1)$};
\draw (-6,2) node{$\{1\}, -1$};
\draw (-7.681981,-3.181981) node{$\{1,2\}, (-1,-1)$}; 
\draw (-2.828427,-4.828427) node{$\{2\}, -1$};
\draw (3.530734,-7.695518) node{$\{1,2\}, (1,-1)$};

\draw[line width=1.5pt] (3.414214,1.414214) -- (10,1.414214);
\draw (7.414214,1.414214) node{$o$};
\draw (7.414214,2) node{$y$};
\draw (13,1.414214) node{$T_{m}((I_2-P_{\eta^m})y)$};

\draw (3.585786,-5.414214) node{$o$};
\draw (3.585786,-4.8) node{$\tilde{y}$};
\draw (13,-5.414214) node{$T_{m}((I_2-P_{\eta^m})\tilde{y})$};
\draw[line width=1.5pt] (0.5857864,-5.4142136) -- (10,-5.414214);
\draw[line width=1.5pt] (-10.000000,-5.414214) -- (-7.414214,-5.414214);


\end{tikzpicture}
\end{center}
	\caption{
	For $n=2$, the sample space $\mathbb R^2$ is partitioned
	corresponding to the model and the sign-vector selected by
	the Lasso when $\lambda=2$ and $X = (x_1 : x_2)$, with
	$x_1 = (1,-1)'$ and $x_2 = (0,1)'$.
	We set  $m = \{1,2\}$ and $\gamma^m = (1,0)' = \eta^m$.
	The point $y$ lies on the black line segment
	$\{z + \eta^m v: v \in T_{m}(z)\}$ for $z=(I_2-P_\eta)y$,
	which is bounded on the left. In particular,
	$T_m(z)$ is bounded.
	For the point $\tilde{y}$, the corresponding black line
	segments together are unbounded on both sides, and hence
	$T_m((I_2-P_{\eta_{m}})\tilde{y})$ is unbounded.
	The gray area indicates the set of points $p$ where
	$\hat{m}(p) = m$ and 
	$T_m((I_2-P_\eta)p)$ is bounded on one side.
}\label{fig:bounded}
\end{figure}
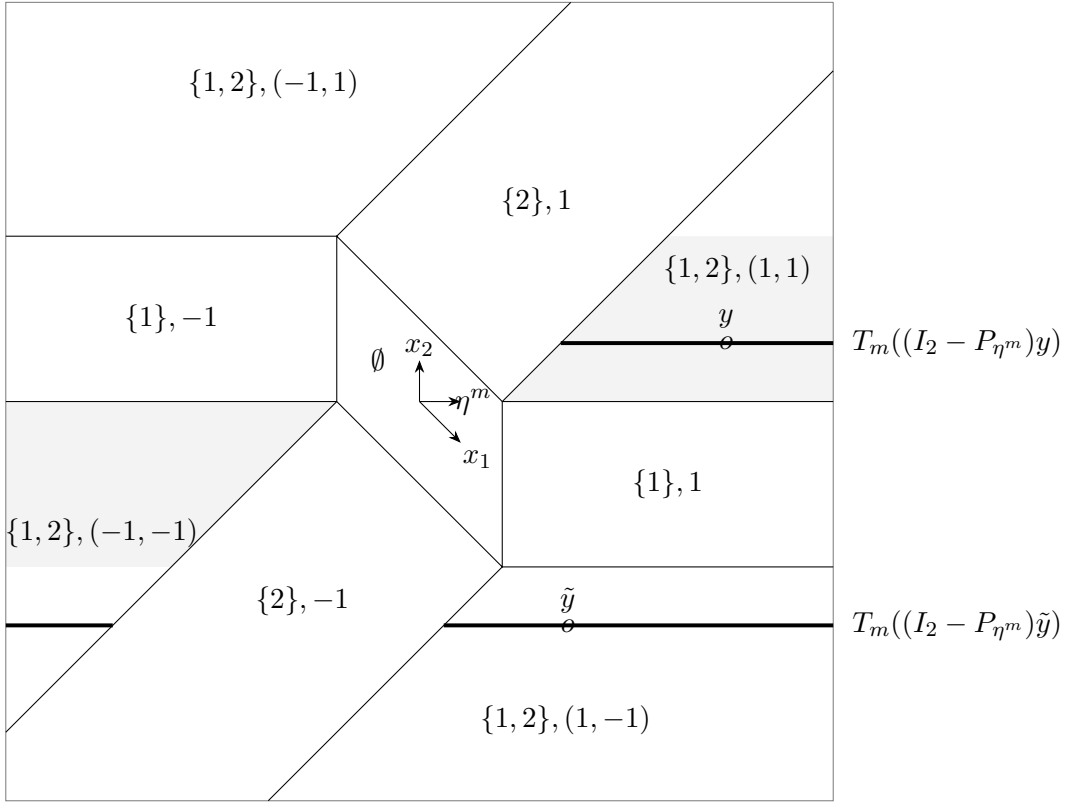 
\end{Aremark}

\section*{Proof of Proposition~\ref{prop:quant}}

We only consider the case where $b=\sup T < \infty$; the case
where $a=\inf T > -\infty$ is treated similarly.
The proof relies on the observation that
$$
\lim_{w \nearrow b} \frac{ U(w) - L(w)}{A(w)} \quad=\quad 1
$$
for $A(w) = \varsigma^2 \frac{ \log((2-\alpha)/\alpha)}{b-w}$
in view of Lemma~\ref{le:5}. The quantiles of $A(W)$ are easy to compute:
If 
$s_{\theta,\varsigma^2}(\kappa)$ denotes the $\kappa$-quantile
of $A(W)$, then
$$
s_{\theta,\varsigma^2}(\kappa) \quad=\quad
\frac{ \varsigma^2 \log((2-\alpha)/\alpha)}{
b-F_{\theta,\varsigma^2}^{T^{-1}}(\kappa)}.
$$
The denominator in the preceding display, which involves the
inverse of $F_{\theta,\varsigma^2}^T(\cdot)$, can be approximated as follows.

\begin{lemma}\label{le:6}
For $\kappa \nearrow 1$, we have
$$
b-F_{\theta,\varsigma^2}^{T^{-1}}(\kappa)
\quad=\quad
(1-\kappa) \varsigma \frac{P(V \in T)}{\phi((b-\theta)/\varsigma)}
(1+o(1)),
$$
where $V \sim N(\theta,\varsigma^2)$.
\end{lemma}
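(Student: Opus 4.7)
The plan is to let $w_\kappa = F_{\theta,\varsigma^2}^{T^{-1}}(\kappa)$, so that by definition $F^T_{\theta,\varsigma^2}(w_\kappa) = \kappa$. By Lemma~\ref{le:4}, $F^T_{\theta,\varsigma^2}(\cdot)$ is strictly increasing and continuous on $T$ with $\lim_{w \nearrow b}F^T_{\theta,\varsigma^2}(w) = 1$, so $w_\kappa \nearrow b$ as $\kappa \nearrow 1$. In particular, for $\kappa$ sufficiently close to $1$, $w_\kappa$ lies in the rightmost sub-interval $(a_K, b_K) = (a_K, b)$ of $T$.

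The next step is to rewrite $1-\kappa$ using the explicit formula for $F^T_{\theta,\varsigma^2}$ given just before Lemma~\ref{le:3}. With $V \sim N(\theta,\varsigma^2)$ and $w_\kappa \in (a_K,b)$, the tail
$$
1 - F^T_{\theta,\varsigma^2}(w_\kappa) \;=\; \frac{P(w_\kappa < V < b)}{P(V \in T)} \;=\; \frac{\Phi\!\left(\tfrac{b-\theta}{\varsigma}\right) - \Phi\!\left(\tfrac{w_\kappa-\theta}{\varsigma}\right)}{P(V\in T)}
$$
must equal $1-\kappa$. Rearranging,
$$
\Phi\!\left(\tfrac{b-\theta}{\varsigma}\right) - \Phi\!\left(\tfrac{w_\kappa-\theta}{\varsigma}\right) \;=\; (1-\kappa)\,P(V\in T).
$$

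Finally, since $w_\kappa \to b$ and $\Phi$ is smooth with $\Phi'=\phi>0$, a first-order Taylor expansion of $\Phi$ about the point $(b-\theta)/\varsigma$ gives
$$
\Phi\!\left(\tfrac{b-\theta}{\varsigma}\right) - \Phi\!\left(\tfrac{w_\kappa-\theta}{\varsigma}\right) \;=\; \phi\!\left(\tfrac{b-\theta}{\varsigma}\right)\cdot\frac{b-w_\kappa}{\varsigma}\,(1+o(1))
$$
as $\kappa\nearrow 1$. Combining the last two displays and solving for $b-w_\kappa$ yields
$$
b - w_\kappa \;=\; (1-\kappa)\,\varsigma\,\frac{P(V\in T)}{\phi((b-\theta)/\varsigma)}\,(1+o(1)),
$$
which is the claim. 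There is no real obstacle here: the argument is a routine Taylor expansion once one observes that the conditioning on $T$ localises the tail probability to the rightmost sub-interval. The only mild point is ensuring that $w_\kappa$ enters $(a_K,b)$ for $\kappa$ close to $1$, which follows directly from $w_\kappa \nearrow b$.
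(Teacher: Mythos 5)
Your proof is correct and rests on essentially the same idea as the paper's: the first-order behaviour of $F^{T}_{\theta,\varsigma^2}$ at $b-$, whose one-sided derivative there equals $\varsigma^{-1}\phi((b-\theta)/\varsigma)/P(V\in T)$ --- the paper packages this via the inverse function theorem applied to $F_{\theta,\varsigma^2}^{T^{-1}}$ at $1$, while you linearize the forward map with a mean value/Taylor argument, which is an equivalent computation. One cosmetic remark: the strict monotonicity, continuity, and the limit $F^{T}_{\theta,\varsigma^2}(w)\to 1$ as $w\nearrow b$ that you invoke follow from the explicit formula preceding Lemma~\ref{le:3} (the density is positive on $T$), not from Lemma~\ref{le:4}, which is a statement about $Q_\gamma$.
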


\begin{proof}
With the convention that 
$F_{\theta,\varsigma^2}^{T^{-1}}(1)=b$ and as $\kappa\nearrow 1$, we have
\begin{align*}
&b-F_{\theta,\varsigma^2}^{T^{-1}}(\kappa) \quad=\quad
  F_{\theta,\varsigma^2}^{T^{-1}}(1) - F_{\theta,\varsigma^2}^{T^{-1}}(\kappa)
  \quad=\quad
	(1-\kappa) \frac{ 
  F_{\theta,\varsigma^2}^{T^{-1}}(1-(1-\kappa)) - 
  F_{\theta,\varsigma^2}^{T^{-1}}(1)}{-(1-\kappa)}
\\&\quad=\quad
  (1-\kappa)\left( 
  (F_{\theta,\varsigma^2}^{T^{-1}})'(1-) + o(1)\right)
   \quad=\quad
  (1-\kappa)
  (F_{\theta,\varsigma^2}^{T^{-1}})'(1-) (1+ o(1))
\\&\quad=\quad
  (1-\kappa) \frac{1}{ (F_{\theta,\varsigma^2}^T)'(b-)}(1+o(1))
  \quad=\quad
  (1-\kappa) \frac{ P(V \in T)}{ \varsigma^{-1} \phi((b-\theta)/\varsigma)}
  	(1+o(1)),
\end{align*}
where the second-to-last equality relies on the inverse function theorem and
the last equality holds because
$F_{\theta,\varsigma^2}^T(w) = P(V \leq w | V\in T)$.
\end{proof}

\begin{lemma}\label{le:7}
The $\kappa$-quantiles of $A(W)$ provide an asymptotic lower bound
for the $\kappa$-quantiles of the length $U(W)-L(W)$, in the sense
that
$\limsup_{\kappa\nearrow 1} 
s_{\theta,\varsigma^2}(\kappa) / q_{\theta,\varsigma^2}(\kappa) \leq 1$.
\end{lemma}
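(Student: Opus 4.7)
The plan is to transfer the pointwise asymptotic equivalence $(U(w)-L(w))/A(w) \to 1$ as $w \nearrow b$ --- established at the start of the proof via Lemma~\ref{le:5} --- into a direct comparison of the c.d.f.'s of $A(W)$ and $U(W) - L(W)$, and then read off the quantile inequality. First I would fix $\epsilon > 0$ and use the equivalence to pick some $w_0 < b$ so that $A(w) \leq (1+\epsilon)(U(w) - L(w))$ for every $w \in (w_0, b) \cap T$. At the same time I would note that $w \mapsto A(w) = \varsigma^2 \log((2-\alpha)/\alpha)/(b-w)$ is strictly increasing on $(-\infty, b)$, so that $W \leq w_0$ forces $A(W) \leq A(w_0)$; this monotonicity is what will allow a one-sided pointwise bound to be promoted to a c.d.f. inequality.

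The central step is the following c.d.f. comparison: for every $x \geq A(w_0)$,
\[
P(A(W) \leq x) \;\geq\; P(U(W) - L(W) \leq x/(1+\epsilon)).
\]
I would prove this by splitting on whether $W \leq w_0$ or $W > w_0$. The first case contributes exactly $P(W \leq w_0)$ (by monotonicity of $A$ and $x \geq A(w_0)$), and on the second case the localized bound yields the inclusion $\{W > w_0,\, U(W)-L(W) \leq x/(1+\epsilon)\} \subseteq \{W > w_0,\, A(W) \leq x\}$. Summing the two contributions and cancelling the overlap gives the displayed inequality.

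Substituting $x = (1+\epsilon)\, q_{\theta,\varsigma^2}(\kappa)$ together with the defining property $P(U(W) - L(W) \leq q_{\theta,\varsigma^2}(\kappa)) \geq \kappa$ will yield $s_{\theta,\varsigma^2}(\kappa) \leq (1+\epsilon)\, q_{\theta,\varsigma^2}(\kappa)$, provided the admissibility condition $(1+\epsilon)\, q_{\theta,\varsigma^2}(\kappa) \geq A(w_0)$ is met. To guarantee the latter for all $\kappa$ sufficiently close to $1$, I need $q_{\theta,\varsigma^2}(\kappa) \to \infty$ as $\kappa \nearrow 1$. This follows because $U(w) - L(w) \to \infty$ as $w \nearrow b$ (by Lemma~\ref{le:5}) and because $W$ has strictly positive density on $(w, b) \cap T$ for any $w < b$, so $P(U(W) - L(W) > M) > 0$ for every $M$, forcing the quantiles to diverge. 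Finally, letting $\epsilon \searrow 0$ completes the argument.

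The step I expect to require the most care is the c.d.f. comparison itself, because the pointwise equivalence is purely local (valid only as $w \nearrow b$), while the quantile statement concerns the full distribution of $W$ on all of $T$. The reconciliation hinges on the monotonicity of $A$: it ensures that values of $W$ away from $b$ contribute a favorable (rather than an unfavorable) term to the inequality, so that the tail of $A(W)$ is controlled by the tail of $U(W)-L(W)$ without any quantitative tail-decay estimate. Without this structural feature one would be forced into a more delicate analysis of the contributions from the other intervals of $T$.
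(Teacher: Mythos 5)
Your proposal is correct and follows essentially the same route as the paper's proof: both localize the asymptotic equivalence $A(w)\sim U(w)-L(w)$ to a neighbourhood $(w_0,b)\cap T$, use the monotone structure of $A$ to show that large values of $A(W)$ can only arise from $W$ near $b$, and then convert the resulting tail comparison into the quantile inequality before letting $\epsilon\searrow 0$. The only cosmetic differences are that the paper phrases the comparison through the event $\{A(W)>\rho\, s_{\theta,\varsigma^2}(\kappa)\}$ (using $s_{\theta,\varsigma^2}(\kappa)\to\infty$ from Lemma~\ref{le:6} and an auxiliary factor $\rho$), whereas you prove a direct c.d.f.\ inequality valid for all $x\geq A(w_0)$ and instead verify $q_{\theta,\varsigma^2}(\kappa)\to\infty$; both admissibility checks are valid.
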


\begin{proof}
Fix $\epsilon\in(0,1)$ and choose $\delta>0$ so that
$
	(1-\epsilon) A(w) \leq U(w)-L(w)
$
whenever $w \in (b-\delta,b)$.
In addition, we may assume that $\delta$ is sufficiently
small so that the c.d.f. of $W$ is strictly increasing
on $(b-\delta,b)$.
Using the formula for $A(W)$, we get that
$$
\Bigg\{  
	A(W) > x
\Bigg\}
\quad=\quad
\Bigg\{  
	W > b-  \frac{\varsigma^2 \log((2-\alpha)/\alpha)}{
		x}
\Bigg\}
$$
for $x>0$.
By Lemma~\ref{le:6}, $s_{\theta,\varsigma^2}(\kappa)$
converges to infinity as $\kappa\nearrow 1$.
Hence, we have 
$ \frac{\varsigma^2 \log((2-\alpha)/\alpha)}{
		s_{\theta,\varsigma^2}(\kappa)} < \delta/2$
for $\kappa$ sufficiently close to $1$, say,
$\kappa \in (\kappa_0,1)$. 
For each $\rho \in (1/2,1)$ and $\kappa\in(\kappa_0,1)$, we obtain that
\begin{align*}
&\Big\{  
	A(W) > \rho s_{\theta,\varsigma^2}(\kappa)
\Big\}
=
\Big\{  
	A(W) > \rho s_{\theta,\varsigma^2}(\kappa),
	W > b-\delta
\Big\}
\subseteq
\Big\{ 
	U(W)-L(W) > (1-\epsilon) \rho s_{\theta,\varsigma^2}(\kappa)
\Big\},
\end{align*}
which entails that
$$
P\left( U(W)-L(W) \leq (1-\epsilon) \rho s_{\theta,\varsigma^2}(\kappa)\right) 
\quad<\quad \kappa
$$
because the c.d.f. of $W$ strictly increases from 
$\rho s_{\theta,\varsigma^2}(\kappa)$ to 
$s_{\theta,\varsigma^2}(\kappa)$.
It follows that
$(1-\epsilon) \rho s_{\theta,\varsigma^2}(\kappa)  \leq 
q_{\theta,\varsigma^2}(\kappa)$
whenever $\rho\in (1/2,1)$ and $\kappa\in(\kappa_0,1)$.
Letting $\rho$ go to $1$ gives
$(1-\epsilon) s_{\theta,\varsigma^2}(\kappa)  \leq 
q_{\theta,\varsigma^2}(\kappa)$ whenever $\kappa\in(\kappa_0,1)$.
Hence, $\limsup_{\kappa\nearrow 1} (1-\epsilon) s_{\theta,\varsigma^2}(\kappa) /
q_{\theta,\varsigma^2}(\kappa)\leq 1$.
Since $\epsilon$ can be chosen arbitrarily close to zero, this
completes the proof.
\end{proof}

\begin{proof}[ Proof of Proposition~\ref{prop:quant}]
Use the formula for 
$s_{\theta,\varsigma^2}(\kappa)$ and Lemma~\ref{le:6} to obtain that
\begin{align*}
&\frac{s_{\theta,\varsigma^2}(\kappa)}{q_{\theta,\varsigma^2}(\kappa)}
\quad=\quad
\frac{1}{ q_{\theta,\varsigma^2}(\kappa)}
\frac{\varsigma \log((2-\alpha)/\alpha)}{1-\kappa} 
\frac{\phi((b-\theta)/\varsigma)}{P(V\in T)} (1+o(1))
\\&\quad\geq \quad 
\frac{1}{ q_{\theta,\varsigma^2}(\kappa)}
\frac{\varsigma \log((2-\alpha)/\alpha)}{1-\kappa} 
\frac{\phi((b-\theta)/\varsigma)}{\Phi((b-\theta)/\varsigma)} (1+o(1))
\quad=\quad 
\frac{r_{\theta,\varsigma^2}(\kappa)}{q_{\theta,\varsigma^2}(\kappa)}
(1+o(1))
\end{align*}
as $\kappa\nearrow 1$ (the inequality holds because
$T \subseteq (-\infty, b)$). The claim now follows from this and
Lemma~\ref{le:7}.
\end{proof}

\begin{remark*}\normalfont
The argument presented here can be extended to obtain an explicit formula for 
the exact rate of $q_{\theta,\varsigma^2}(\kappa)$ as
$\kappa\nearrow 1$ or as $\theta\to\infty$ (or both).
The resulting expression is more involved (the cases 
where $T$ is bounded from one side and from both sides need
separate treatment)
but qualitatively similar to
$r_{\theta,\varsigma^2}(\kappa)$, as far as its behavior
for $\kappa\nearrow 1$ or $\theta\to\infty$ is concerned.
In view of this and for the sake of brevity,
results for the exact rate are not presented here.
\end{remark*}

\end{appendices}


\spacingset{1.44} 

\bibliographystyle{agsm}

\bibliography{./bibliography}

\end{document}